\documentclass[a4paper,12pt]{article}
\linespread{1.1}

\usepackage[english]{babel}
\usepackage{mathrsfs}  
\usepackage{algorithmicx}
\usepackage{algorithm}
\usepackage{algpseudocode}
\usepackage{geometry}    
\usepackage{tikz}
\usepackage[colorlinks,linkcolor=red,citecolor=green]{hyperref}
\geometry{a4paper,top=25mm,left=20mm,right=20mm,bottom=25mm,headsep=8mm,footskip=8mm}
\usepackage{blindtext}
\usepackage[utf8]{inputenc}
\usepackage{bbm}
\usepackage{xcolor}
\usepackage{graphicx}
\usepackage{wrapfig}
\usepackage{caption}
\usepackage{subcaption}
\usepackage{multicol}
\usepackage{xspace,amsmath,amsthm,amssymb,enumerate,color,esint,paralist}
\usepackage{datetime}
\usepackage{nicefrac}
\usepackage{pgfplots}

\usepackage[capitalise,compress]{cleveref} 

\crefname{equation}{}{}


\usepackage{appendix}
\setcounter{tocdepth}{2}
\setcounter{secnumdepth}{2}
\newtheorem{theorem}{Theorem}[section]
\newtheorem*{theorem*}{Theorem}
\newtheorem{lemma}[theorem]{Lemma}

\newtheorem{corollary}[theorem]{Corollary}
\newtheorem*{claim*}{Claim}

\newtheorem*{remark*}{Remark}

\newtheorem*{examples*}{Examples}

%
 


\newcommand{\R}{\mathbbm{R}}
\newcommand{\N}{\mathbbm{N}}
\newcommand{\C}{\mathbbm{C}}

\newcommand{\1}{\mathbbm{1}}
\renewcommand{\P}{\mathbbm{P}}\newcommand{\var}{\mathrm{Var}}
\newcommand{\E}{\mathbbm{E}}
\newcommand{\unif}{\mathfrak{u}}

\renewcommand{\gets}{\curvearrowleft}

\renewcommand{\epsilon}{\varepsilon}
\allowdisplaybreaks
\begin{document}
\title{
Multilevel Picard approximations for\\
McKean-Vlasov stochastic differential equations}

\author{Martin Hutzenthaler\thanks{Faculty of Mathematics, University of Duisburg-Essen, Essen, Germany; e-mail: \texttt{martin.hutzenthaler}\textcircled{\texttt{a}}\texttt{uni-due.de}}
\and
Thomas Kruse\thanks{Institute of Mathematics, University of Gie{\ss}en, Gie{\ss}en, Germany; e-mail: \texttt{thomas.kruse}\textcircled{\texttt{a}}\texttt{math.uni-giessen.de}}
\and 
Tuan Anh Nguyen\thanks{Faculty of Mathematics, University of Duisburg-Essen, Essen, Germany; e-mail: \texttt{tuan.nguyen}\textcircled{\texttt{a}}\texttt{uni-due.de}}
}

\maketitle
\maketitle
\makeatletter
\let\@makefnmark\relax
\let\@thefnmark\relax
\@footnotetext{\emph{Key words and phrases:}
curse of dimensionality, high-dimensional SDEs, 
high-dimensional semilinear McKean-Vlasov SDEs, multilevel Picard approximations, multilevel Monte Carlo method. }
\@footnotetext{\emph{AMS 2010 subject classification}: 65M75} 
\makeatother

\begin{abstract}
In the literature
there exist approximation methods for McKean-Vlasov stochastic differential equations which have a computational effort of order $3$.
In this article we introduce full-history recursive
multilevel Picard approximations for McKean-Vlasov stochastic
differential equations.
We prove that these MLP approximations have
 computational effort of order $2+$
which is essentially  optimal in high dimensions.
\end{abstract}


\allowdisplaybreaks\sloppy
\linespread{1.1}
\section{Introduction}
McKean \cite{McKean1966} introduced stochastic differential equations (SDEs)
whose coefficients depend on the distribution of the solution.
These McKean-Vlasov SDEs allow a stochastic representation of solutions
of nonlinear, possibly non-local partial
parabolic differential equations (PDEs) such as Vlasov's equation,
Boltzmann's equation, or Burgers' equation.
Moreover,
weakly dependent diffusions
converge to independent solutions of
McKean-Vlasov SDEs 
 as the system size tends to infinity.
This phenomenon was termed \emph{propagation of chaos} by Kac
\cite{Kac1956} and is well studied in the literature;
 see, e.g., \cite{McKean1967,Oelschlaeger1985,Gaertner1988,Sznitman1989,Hutzenthaler2012,ST19,hutzenthaler2020propagation}.

For simplicity we consider in this article the McKean-Vlasov SDE in \eqref{eq:McKV}
below
with additive noise
 whose drift coefficient depends linearly on the distribution
of the solution.
In the literature there exist a number of approximation methods
for the solution of \eqref{eq:McKV}.
A direct approach approximates the spatial integral in \eqref{eq:McKV}
with an average over weakly dependent versions of the solution
(resulting in weakly interacting diffusions)
and the temporal integral in \eqref{eq:McKV} with suitable Rieman sums
(Euler method).
The $L^2$-error of this approximation is of order $1/N$ 
as $\N=\{1,2,\ldots\}\ni N\to\infty$ if
we use $N^2$ interacting diffusions and $N$ time intervals
resulting in $N^5$ function evaluations of the drift coefficient.
Thus the computational effort for achieving $L^2$-error $\varepsilon\in(0,1)$
is of order $\varepsilon^{-5}$ as $\epsilon\to0$;
cf., e.g., \cite{antonelli2002rate,bossy2002rate,bossy1996convergence}.
This computational effort for achieving error $\varepsilon\in(0,1)$
can be reduced to order $\varepsilon^{-4}|\log(\varepsilon)|^3$ by replacing
averages by the multilevel Monte Carlo method
(cf.\ \cite[Theorem 4.5]{STT19} and, e.g., \cite{h01,g08b,LemairePages2017})
and to order $\varepsilon^{-3}$ by the antithetic multilevel Monte Carlo
method  (cf.\ \cite[Theorem 4.3]{ST19}).
In the very special case of ordinary differential equations with
an expectation in the driving function, where the plain vanilla
Monte Carlo method  has computational effort of order $\varepsilon^{-3}$,
\cite[Theorem 1.1]{Beck2021full} 
shows that this computational effort can be reduced to order $\varepsilon^{-2+}$.
In low dimensions, the spatial integral in \eqref{eq:McKV} can also be approximated 
e.g.\ by projections on function spaces
and then the computational effort can be reduced
to order $\epsilon^{-2}|\log(\epsilon)|^4$ (or better); cf., e.g., \cite[Theorem 4]{DST19}.
In high dimensions, the numerical approximation of
Lebesgue integrals 
with (deterministic) quadrature rules suffers
from the curse of dimensionality;
see \cite{sukharev1979optimal}.
The Monte Carlo method overcomes this curse and achieves a
$L^2$-error $\epsilon\in(0,1)$ with computational effort
of order $\epsilon^{-2}$ in the numerical approximation of Lebesgue integrals without the curse of dimensionality.
Thus, in high dimensions, the computational effort
for approximating the spatial integral
on the right-hand side of \eqref{eq:McKV} 
has optimal order $\epsilon^{-2}$ and this is clearly a
lower bound for the approximation of the full McKean-Vlasov SDE.
It remained an open question in the literature whether
McKean-Vlasov SDEs can be approximated
up to $L^2$-error $\epsilon\in(0,1)$
with computational effort of order $\epsilon^{-2}$
(or whether an higher effort such as $\epsilon^{-3}$
is required in general).

In this article we partially answer this question positively.
In other words, we show that the computational problem of approximating
the solution
of the McKean-Vlasov SDE in \eqref{eq:McKV} has up to logarithmic
factors the
same computational complexity as the numerical approximation
of the spatial integral in \eqref{eq:McKV}.
More specifically,
we view \eqref{eq:McKV} as fixed point equation
and
adapt the full-history multilevel Picard (MLP) method,
which was introduced in \cite{EHutzenthalerJentzenKruse2016},
to this fixed point equation.
This MLP method was already successfully applied to overcome
the curse of dimensionality in the numerical approximation
of semilinear PDEs; see, e.g., \cite{HJK+18,hjk2019overcoming,beck2019overcoming,HJKN20}.
Our
MLP approximation method \eqref{eq:MLP} below is, roughly speaking,
 based on the idea to (a) reformulate
the McKean-Vlasov SDE in \eqref{eq:McKV}
 as a stochastic fixed point problem
$X=\Phi(X)$ with a suitable function
$\Phi$,
to (b) approximate the fixed point $X$ through Picard iterates $(X_k)_{k\in\{0,1,2,\ldots\}}$,
to (c) write $X$ as telescoping series over this sequence, that is,
\begin{equation}  \begin{split} \label{eq:telescope}
  X=X_1+\sum_{k=1}^{\infty}(X_{k+1}-X_k)
   =X_1+\sum_{k=1}^{\infty}\Big(\Phi(X_{k})
-\Phi(X_{k-1})\Big),
\end{split}     \end{equation}
and to (d) approximate the series by a finite sum and the
temporal and spatial integrals
in the summands by Monte Carlo averages with fewer and fewer
 independent samples
as $k$ increases.
Roughly speaking, the rationale behind this approach is that
 $X_{k+1}-X_k$
converges exponentially fast (or even factorially fast)
 to $0$ as $k\to\infty$
and the mean squared error of the Monte Carlo average is bounded by the
second moment of the involved random variable divided by the number of
independent samples in the average.
This motivates our MLP approximations in~\eqref{eq:MLP}.

The main result of this article, Theorem \ref{t01}
in Section \ref{sec:MLP} below,
 implies that the MLP approximation method
approximates solutions of McKean-Vlasov SDEs with additive noise
 whose drift coefficients depend linearly on the distribution
of the solution
up to an $L^2$-error $\epsilon\in(0,1)$
with computational effort $\epsilon^{-2+}$
without suffering from the curse of dimensionality.
To illustrate our main results,
 we now present in Theorem \ref{t00}
 a special case of Theorem \ref{t01}.
\begin{theorem}\label{t00}
Let $\delta, T\in (0,\infty)$,
$d\in\N$,
$\xi\in\R^d$,
$\Theta= \bigcup_{n\in\N}(\N_0)^n$,
let $\lVert \cdot\rVert\colon \R^d\to[0,\infty)$ 
be a norm,
let
 $\mu \colon \R^d\times\R^d\to\R^d$ be globally Lipschitz continuous,
let $(\Omega,\mathcal{F},\P)$ be a  probability space, 
let $\unif^\theta\colon\Omega\to[0,1]$, $\theta\in\Theta$, be i.i.d.\ random variables, assume for all $t\in [0,1] $ that
$\P(\unif^0\leq t)=t$,
let $W^\theta\colon [0,T]\times\Omega\to\R^d$, $\theta\in\Theta$, be i.i.d.\ standard 
Brownian motions with continuous sample paths, 
assume that 
$(\unif^\theta)_{ \theta\in\Theta}$ and
$(W^\theta)_{ \theta\in\Theta}$ 
are independent,
let $X^\theta_{n,m}\colon [0,T]\times\Omega\to\R^d$, $\theta\in\Theta$, $n,m\in\N_0$,  satisfy for all
$\theta\in\Theta$, $m\in\N$, $n\in \N_0$, $t\in [0,T] $  that
\begin{align}
\begin{split}\label{eq:MLP}
&X_{n,m}^\theta(t)= \left(\xi+ W^\theta\left(\max\!\left(\{\tfrac{kT}{m^n}\colon k\in\N_0\}\cap[0,t]\right)\right)+t\mu(0,0)\right)\1_{\N}(n)\\&+ \sum_{\ell=1}^{n-1}
\sum_{k=1}^{m^{n-\ell}}\tfrac{t\left[
\mu\bigl(X^\theta_{\ell,m}(\unif^{(\theta,n,k,\ell)}t),
X^{(\theta,n,k,\ell)}_{\ell,m}(\unif^{(\theta,n,k,\ell)}t)
\bigr)
-
\mu\bigl(X^\theta_{\ell-1,m}(\unif^{(\theta,n,k,\ell)}t),
X^{(\theta,n,k,\ell)}_{\ell-1,m}(\unif^{(\theta,n,k,\ell)}t)
\bigr)\right]}{m^{n-\ell}},
\end{split}\end{align}
let
$X\colon [0,T]\times\Omega\to\R^d$ be a $(\sigma(\{W^0(s)\colon s\in [0,t]\}))_{t\in[0,T]}$-adapted
stochastic process 
with continuous sample paths, assume for all $t\in[0,T]$  that
$\int_0^T\left(\E\!\left[\lVert X(s)\rVert^2\right]\right)^{\!\nicefrac{1}{2}}\,ds<\infty$
 and
\begin{align}\label{eq:McKV}
X(t)=\xi+\int_0^t \int \mu(X(s),x)\P\bigl(X(s)\in dx\bigr)\,ds+W^0(t),
\end{align}
and
for every $n,m\in\N $
let
$C_{n,m}\in \N_0$
be the number of function evaluations of $\mu$ and the number of scalar random variables which are used to compute one realization of $X_{n,m}^0(T)$
(cf.~\eqref{a06} in~\cref{t01} below).
 Then there exist
 $c\in\R$
 and
  $\mathrm{n}=(\mathrm{n}_\epsilon)_{\epsilon\in (0,1]}\colon (0,1] \to \N$,
such that for all
$\epsilon\in(0,1]$ it holds that
$ \sup_{k\in[\mathrm{n}_\epsilon,\infty)\cap\N}\sup_{t\in[0,T]}\left(\E \!\left[
\lVert 
X^0_{k,k}(t)-X(t)\rVert^2\right]\right)^{1/2}\leq \epsilon$ and $C_{\mathrm{n}_\epsilon,\mathrm{n}_\epsilon}\le c \, \epsilon^{-(2+\delta)}$.
\end{theorem}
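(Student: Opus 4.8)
\emph{Plan.}
I would prove Theorem~\ref{t00} directly (it is the additive-noise, linearly-law-dependent special case of \cref{t01}). Fix $L\in(0,\infty)$ with $\lVert\mu(x_1,y_1)-\mu(x_2,y_2)\rVert\le L(\lVert x_1-x_2\rVert+\lVert y_1-y_2\rVert)$ for all $x_i,y_i\in\R^d$, and for a process $Z$ write $\tilde Z$ for an independent copy of $Z$. The plan is: (i) reformulate \eqref{eq:McKV} as a fixed point and record the factorial convergence of the exact Picard iterates; (ii) derive a recursive $L^2$-error estimate for \eqref{eq:MLP}, assembled from a Brownian time-discretization term, a Picard-truncation term, a Monte-Carlo-variance term and a bias/propagation term; (iii) solve that recursion, combine it with a cost recursion, and optimize with $m=n$. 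For step (i): write \eqref{eq:McKV} as $X=\Phi(X)$ with $\Phi(Y)(t):=\xi+W^0(t)+\int_0^t\E[\mu(y,\tilde Y(s))]\big|_{y=Y(s)}\,ds$; from \eqref{eq:McKV}, the hypothesis $\int_0^T(\E[\lVert X(s)\rVert^2])^{1/2}\,ds<\infty$ and Gronwall's inequality one gets $\kappa:=\sup_{t\in[0,T]}(\E[\lVert X(t)\rVert^2])^{1/2}<\infty$; with $Y_0\equiv0$ and $Y_k:=\Phi(Y_{k-1})$, iterating the integral inequality $\lVert Y_k(t)-X(t)\rVert_{L^2}\le 2L\int_0^t\lVert Y_{k-1}(s)-X(s)\rVert_{L^2}\,ds$ (Picard--Lindel\"of) yields $\sup_{t\le T}\lVert Y_k(t)-X(t)\rVert_{L^2}\le\kappa\,(2LT)^k/k!$ and $\lVert Y_k(t)-Y_{k-1}(t)\rVert_{L^2}\le\kappa\,(2Lt)^{k-1}/(k-1)!$.

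\emph{Step (ii): the recursive error estimate.}
Let $E_n(t):=\E[\lVert X^0_{n,m}(t)-X(t)\rVert^2]$ (index-independent, by exchangeability of $(\unif^\theta)$ and $(W^\theta)$). I would decompose $X^0_{n,m}(t)-X(t)$ into (a) the Brownian time-discretization error $W^0(\mathrm{fl}_n(t))-W^0(t)$, where $\mathrm{fl}_n(t):=\max(\{kT/m^n:k\in\N_0\}\cap[0,t])$, with $\E[\lVert\cdot\rVert^2]=d(t-\mathrm{fl}_n(t))\le dTm^{-n}$ and zero bias since it is centred; (b) the truncation tail $Y_n(t)-X(t)$, with $\lVert\cdot\rVert_{L^2}\le\kappa(2Lt)^n/n!$ by step (i); and (c) the statistical error $\sum_{\ell=1}^{n-1}B_\ell$, where $B_\ell$ is the $\ell$-th Monte Carlo average in \eqref{eq:MLP} minus the exact Picard increment $Y_{\ell+1}(t)-Y_\ell(t)$. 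The crux of (c) is that the random variables attached to pairwise distinct indices are independent: conditioning on the lower levels, the level-$\ell$ summands are conditionally i.i.d., so their averages have conditional variance of order $m^{-(n-\ell)}$, the conditionally centred parts of the $B_\ell$ are pairwise $L^2$-orthogonal across $\ell$, and the conditional biases sum telescopically to $\int_0^t(\E[\mu(X_{n-1,m}(s),\tilde X_{n-1,m}(s))]-\E[\mu(Y_{n-1}(s),\tilde Y_{n-1}(s))])\,ds$, which is $\le 2L\int_0^t((E_{n-1}(s))^{1/2}+\kappa(2Ls)^{n-1}/(n-1)!)\,ds$; the $\mu$-increments are controlled via $\E[\lVert X_{\ell,m}(s)-X_{\ell-1,m}(s)\rVert^2]\le 2(E_\ell(s)+E_{\ell-1}(s))$. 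Using $\lVert a+b+c\rVert^2\le 3(\lVert a\rVert^2+\lVert b\rVert^2+\lVert c\rVert^2)$ and Cauchy--Schwarz, this gives, for some $c=c(d,T,L)\in[1,\infty)$,
\[
E_n(t)\ \le\ c\,t\!\int_0^t\! E_{n-1}(s)\,ds+c\sum_{\ell=1}^{n-1}\frac{t}{m^{\,n-\ell}}\!\int_0^t\!\big(E_\ell(s)+E_{\ell-1}(s)\big)\,ds+c\,\frac{(c\,t^2)^{\,n}}{n!}+\frac{c}{m^{\,n}},
\]
together with $\sup_{t\le T}(E_0(t)+E_1(t))\le c$.

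\emph{Step (iii): solving the recursion, the cost count, and the choice of $\mathrm n_\epsilon$.}
I would solve the displayed recursion by induction on $n$, keeping the full $t$-dependence (passing to $\sup_{t\le T}$ here forfeits the factorial gains produced by the iterated $\int_0^t$), with an ansatz $E_n(t)\le\sum_{j=0}^n\binom nj\alpha_j(ct^2)^{n-j}/(n-j)!$ in which $\alpha_j$ absorbs the $j$-th time-discretization source $cm^{-j}$ and the $O(1)$ initial errors; this should reproduce each term on the right and yield $\sup_{t\le T}E_n(t)\le\Psi(n,m)$ with $\Psi(n,m)\le c(cT^2)^n/n!+c\,m\,e^{cT^2m}m^{-n}$, hence $\Psi(n,m)\to0$ as $n\to\infty$ for every fixed $m\ge2$ and, in particular, $\Psi(k,k)\le k^{-(1-o(1))k}$ as $k\to\infty$; fix $N_0\in\N$ with $k\mapsto\Psi(k,k)$ nonincreasing on $[N_0,\infty)$. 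From the structure of \eqref{eq:MLP} one gets $C_{n,m}\le c_0+\sum_{\ell=1}^{n-1}m^{\,n-\ell}(2C_{\ell,m}+2C_{\ell-1,m}+c_0)$ with $c_0=c_0(d)$, and solving this linear recursion gives $C_{n,m}\le c_0(c_1m)^n$ for some $c_1\in[1,\infty)$, so $C_{n,n}\le c_0(c_1n)^n$. Finally, fixing $\delta\in(0,\infty)$ and setting, for $\epsilon\in(0,1]$, $\mathrm n_\epsilon:=\min\{n\ge N_0:\Psi(n,n)\le\epsilon^2\}$, monotonicity gives $\sup_{k\in[\mathrm n_\epsilon,\infty)\cap\N}\sup_{t\in[0,T]}(\E[\lVert X^0_{k,k}(t)-X(t)\rVert^2])^{1/2}\le\sup_{k\ge\mathrm n_\epsilon}(\Psi(k,k))^{1/2}\le\epsilon$, while $\Psi(k,k)\le k^{-(1-o(1))k}$ forces $\mathrm n_\epsilon\log\mathrm n_\epsilon=(2+o(1))\log(1/\epsilon)$ and $\mathrm n_\epsilon=o(\log(1/\epsilon))$ as $\epsilon\downarrow0$, so $\log C_{\mathrm n_\epsilon,\mathrm n_\epsilon}\le\log c_0+\mathrm n_\epsilon\log(c_1\mathrm n_\epsilon)=\mathrm n_\epsilon\log\mathrm n_\epsilon+o(\log(1/\epsilon))=(2+o(1))\log(1/\epsilon)\le(2+\delta)\log(1/\epsilon)$ for all sufficiently small $\epsilon$; enlarging the prefactor $c$ to absorb the finitely many remaining $\epsilon$ finishes the argument.

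\emph{Main obstacle.}
The hard part is step (iii): inverting the coupled integral recursion so as to obtain $\sup_{t\le T}E_n(t)$ with the correct \emph{joint} dependence on $n$ and $m$. The propagation term $c\,t\int_0^tE_{n-1}$ carries a constant that need not be $<1$ and is tamed only because iterated time integrals generate factorial denominators, so one must simultaneously track the geometrically(-in-$\ell$) decaying Monte Carlo sources $cm^{-(n-\ell)}$ and the $O(1)$ errors $E_0,E_1$ and verify that their propagated contributions stay dominated by $\Psi(n,m)$. A second, more technical, difficulty --- already handled in \cite{EHutzenthalerJentzenKruse2016} --- is to make the cross-level $L^2$-orthogonality and the conditional bias/variance splitting of (c) rigorous in the presence of the intricate index-tree dependence in \eqref{eq:MLP}, which needs a careful choice of the conditioning $\sigma$-algebras and a tower/independence argument. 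Once the $(n,m)$-bound is established the rest is bookkeeping: the cost is $\asymp m^n$ up to a constant-base exponential factor whereas the error cannot beat $m^{-n/2}$ because of the time discretization, so the cost is of order $\epsilon^{-2}$ times the factor $c_1^{\,\mathrm n_\epsilon}=\epsilon^{-o(1)}$, which the choice $m=\mathrm n_\epsilon\to\infty$ converts into the claimed $\epsilon^{-(2+\delta)}$.
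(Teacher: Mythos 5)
Your proposal is correct and follows essentially the same route as the paper's proof of \cref{t01}: a conditional bias/variance decomposition with respect to the $\sigma$-algebra generated by the lower levels, level-$\ell$ Monte Carlo variances of order $m^{-(n-\ell)}$ times $\E[\lVert X^0_{\ell,m}-X^0_{\ell-1,m}\rVert^2]$, a Gronwall-type recursion whose solution decays like $m^{-n/2}$ times a factorial gain, a cost recursion solved to $(cm)^n$, and the diagonal choice $m=n$ with the $\mathrm n_\epsilon\log\mathrm n_\epsilon\le(2+o(1))\log(1/\epsilon)$ bookkeeping. The only deviations are inessential: you route the bias through the exact Picard iterates $Y_k$ (adding the truncation term $\kappa(2LT)^n/n!$), whereas the paper compares $\E[X^0_{n,m}(t)-X(t)\mid\mathcal G_{n-1,m}]$ directly with $X$ and obtains the factorial decay from the recursion itself via \cite[Lemma~3.9]{HJKN20}; and your cost recursion omits the $\mathfrak{v}m^n d$ and $\mathfrak{v}m^{\ell}d$ terms of \eqref{a06} for sampling the Brownian paths, which, since $m^{n-\ell}\cdot m^{\ell}d=m^n d$, does not affect the conclusion $C_{n,m}\le c_0(c_1 m)^n$.
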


In the following we add further comments on our approximation method.
The MLP approximations
  ${X}_{ n,m}^{\theta} $, 
$n\in\N_0$, $m\in\N$, $\theta\in\Theta$, in \eqref{eq:MLP}
are indexed by the number $n\in\N_0$ of fixed point iterates,
by a parameter $m\in\N$ which is fixed in the recursion
 in \eqref{eq:MLP} and is the basis of the number of Monte Carlo
averages,
and by a parameter $\theta\in\Theta$ which is
used to distinguish independent
MLP approximations in \eqref{eq:MLP}.
We note for every $\theta$ that all
$X_{n,m}^{\theta}$, $n,m\in\N$, depend on the same Brownian path
$W^{\theta}$ so that for all $n,m\in\N$, $\theta\in\Theta$, $t\in[0,T]$
we need to have
$(W^{\theta}(\tfrac{kT}{m^n}))_{k\in\{0,1,\ldots,m^n\}}$ 
as argument of the function call which calculates
$X_{n,m}^{\theta}(t)$.

The remainder of this article is organized as follows.
In Section \ref{sec:recursions} we solve recursions of
Gronwall-type. In particular, Corollary \ref{l05} will be applied
to obtain an upper bound for the computational effort
which satisfies the recursion in \eqref{a06} in Theorem~\ref{t01}.
Moreover, in Theorem \ref{t01} in Section \ref{sec:MLP}
we estimate the $L^2$-error between the solution of the McKean-Vlasov SDE
and our MLP approximations
and we estimate the computational effort for computing
one realization of our MLP approximation.

\section{Discrete Gronwall-type recursions}\label{sec:recursions}
In this section we solve recursions of Gronwall-type.
The following result, Lemma~\ref{r01}, provides the exact solutions
of certain linear recurrence relations of second order.

\begin{lemma}[Two-step recursions]\label{r01}
Let $\kappa ,\lambda,x_1,x_2 \in \C$, $
(a_k)_{k\in\N_0},(b_k)_{k\in \N_0}\subseteq \C$  
satisfy for all $k\in \N_0$, $i\in \{1,2\}$ that
\begin{align}\label{r02}
a_0=b_0, \quad
a_1=b_1+\kappa b_0,
\quad a_{k+2}=b_{k+2}+\kappa a_{k+1}+\lambda a_{k},\quad 
x_i^2=\kappa  x_i+\lambda,\quad\text{and}\quad
x_1\neq x_2.
\end{align}
Then it holds for all $k\in \N_0$ that $a_{k}=\frac{1}{x_2-x_1}\sum_{\ell=0}^{k}b_\ell(x_2^{k-\ell+1}-x_1^{k-\ell+1}).$
\end{lemma}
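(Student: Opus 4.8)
The plan is to prove the closed-form solution for the two-step recursion by induction on $k$, using the fact that the characteristic roots $x_1, x_2$ satisfy $x_i^2 = \kappa x_i + \lambda$. Let me denote the candidate solution as $\widetilde{a}_k := \frac{1}{x_2 - x_1}\sum_{\ell=0}^{k} b_\ell(x_2^{k-\ell+1} - x_1^{k-\ell+1})$, and verify that $\widetilde a_k$ satisfies the same three defining relations as $a_k$ in \eqref{r02}; since a second-order recursion together with two initial values determines the sequence uniquely, this forces $a_k = \widetilde a_k$ for all $k$.

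Let me check the plan works. First I would verify the base cases $k=0$ and $k=1$. For $k=0$: $\widetilde a_0 = \frac{1}{x_2-x_1} b_0 (x_2 - x_1) = b_0 = a_0$. For $k=1$: $\widetilde a_1 = \frac{1}{x_2-x_1}\bigl(b_0(x_2^2 - x_1^2) + b_1(x_2 - x_1)\bigr) = b_0(x_2 + x_1) + b_1$. Since $x_1, x_2$ are the two roots of $z^2 - \kappa z - \lambda = 0$, Vieta's formula gives $x_1 + x_2 = \kappa$, so $\widetilde a_1 = b_1 + \kappa b_0 = a_1$.

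Next I would check the recursion step: I need $\widetilde a_{k+2} = b_{k+2} + \kappa \widetilde a_{k+1} + \lambda \widetilde a_k$. The key computation is that for each fixed $\ell$, the term $x_i^{k-\ell+1}$ appearing in $\widetilde a_{k+2}$ (with exponent $k+2-\ell+1 = k-\ell+3$) relates to the corresponding terms in $\widetilde a_{k+1}$ (exponent $k-\ell+2$) and $\widetilde a_k$ (exponent $k-\ell+1$) via $x_i^{k-\ell+3} = \kappa x_i^{k-\ell+2} + \lambda x_i^{k-\ell+1}$, which follows by multiplying $x_i^2 = \kappa x_i + \lambda$ by $x_i^{k-\ell+1}$. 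Summing this identity over $\ell = 0, \dots, k$ against the weights $\frac{b_\ell}{x_2-x_1}$ accounts for all but the $\ell = k+1$ and $\ell = k+2$ terms in $\widetilde a_{k+2}$; the $\ell = k+1$ term of $\widetilde a_{k+2}$ equals $\frac{b_{k+1}}{x_2-x_1}(x_2^2 - x_1^2) = b_{k+1}(x_1+x_2) = \kappa b_{k+1}$, which matches $\kappa$ times the $\ell = k+1$ term of $\widetilde a_{k+1}$; and the $\ell = k+2$ term of $\widetilde a_{k+2}$ equals $\frac{b_{k+2}}{x_2-x_1}(x_2 - x_1) = b_{k+2}$, producing the $b_{k+2}$ summand.

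There is no real obstacle here — this is a routine verification. The only mild subtlety is bookkeeping: one must be careful that $\widetilde a_{k+1}$ has summation range $\ell = 0, \dots, k+1$ and $\widetilde a_{k+2}$ has range $\ell = 0, \dots, k+2$, so the index-matching at the endpoints $\ell = k+1$ and $\ell = k+2$ needs explicit attention rather than being folded into the generic $x_i^{k-\ell+3} = \kappa x_i^{k-\ell+2} + \lambda x_i^{k-\ell+1}$ identity. (Equivalently, one could avoid the endpoint casework by observing that the formula extends consistently if one sets $b_\ell$-weighted terms with negative exponent to zero, but the direct split is cleaner.) Since the hypothesis $x_1 \neq x_2$ guarantees $\frac{1}{x_2 - x_1}$ is well-defined, the argument goes through without further conditions.
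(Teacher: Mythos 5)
Your proposal is correct and follows the same overall strategy as the paper: define the candidate sequence, check the two initial values (using Vieta's relation $x_1+x_2=\kappa$), verify the two-step recursion, and conclude by uniqueness/induction. The one genuine difference is in how the recursion step is verified at the tail of the sum: the paper applies the characteristic identity to \emph{all} indices $\ell\in\{0,\dots,k+2\}$, which produces a term $\tfrac{\lambda b_{k+2}}{x_2-x_1}\bigl(\tfrac{1}{x_2}-\tfrac{1}{x_1}\bigr)$ and hence requires $x_1,x_2\neq 0$, forcing a separate treatment of the case $\lambda=0$. By peeling off the $\ell=k+1$ and $\ell=k+2$ terms explicitly, you never divide by $x_i$ and therefore need no case distinction; this is a mild but real simplification, and your endpoint bookkeeping ($\ell=k+1$ contributing $\kappa b_{k+1}$, matching the extra term in $\kappa\widetilde a_{k+1}$, and $\ell=k+2$ contributing $b_{k+2}$) checks out.
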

\begin{proof}[Proof of \cref{r01}]Throughout this proof let $(z_k)_{k\in\N_0}\subseteq \C$ satisfy for all $k\in \N_0$ that
$
z_{k}=\frac{1}{x_2-x_1}\sum_{\ell=0}^{k}
b_\ell \left( x_2^{k-\ell+1}- x_1^{k-\ell+1}\right)
$. We consider the two cases $\lambda=0$ and $\lambda\neq 0$.

\emph{Case 1.} $\lambda=0$.
The fact that $x_1\neq x_2$ and
the fact that
$\forall\,i\in\{1,2\}\colon x_i^2=\kappa  x_i$ prove that  
$\kappa\neq 0 $ and 
$(x_1,x_2)\in \{(0,\kappa),
(\kappa,0)\}$.
This proves  for all $k\in \N_0$  that
$z_k=\tfrac{1}{\kappa }\sum_{\ell=0}^{k} b_\ell \kappa ^{k-\ell+1}= \sum_{\ell=0}^{k} b_\ell \kappa ^{k-\ell}$. This and the fact that $\lambda=0$ show for all
$k\in\N_0$ that
$z_0=b_0$,
$z_1= b_1+\kappa b_0$, and
$z_{k+2}= \sum_{\ell=0}^{k+2} b_\ell \kappa ^{k+2-\ell}
=b_{k+2}+\kappa z_{k+1}= b_{k+2}+\kappa z_{k+1}
+\lambda z_{k}.$ 
This,  \eqref{r02}, and induction prove  for all $k\in\N_0$ that 
$z_k=a_k$. 

\emph{Case 2.} $\lambda \neq 0$. The fact that $\forall\,i\in \{1,2\}\colon x_i^2=\kappa  x_i+\lambda$ implies that $x_1\neq 0$ and $x_2\neq 0$.
Moreover, 
the fact that
$0=(x_1^2 - \kappa x_1-\lambda)-(x_2^2 - \kappa x_2-\lambda)= (x_1-x_2)(x_1+x_2-\kappa)  $ and the fact that
$x_1\neq x_2$ imply that $x_1+x_2=\kappa$.
Next, the fact that
$0=\frac{x_1^2 - \kappa x_1-\lambda}{x_1} -\frac{x_2^2 - \kappa x_2-\lambda}{x_2}=x_1-x_2+\lambda(\frac{1}{x_2}-\frac{1}{x_1})$ imply that
$\frac{\lambda}{x_2-x_1}(\frac{1}{x_2}-\frac{1}{x_1})=1$.
This, the definition of $ (z_k)_{k\in\N_0}$, and the fact that $x_1+x_2=\kappa$ 
 imply for all $k\in \N_0$ that
$z_0=b_0=a_0$, 
$z_1=\frac{1}{x_2-x_1}\left(b_0(x_2^2-x_1^2)+b_1(x_2-x_1)\right)=b_0(x_1+x_2)+b_1=b_0 \kappa+b_1 =a_1$, and
\begin{align}
&z_{k+2}=
\sum_{\ell=0}^{k+2}\tfrac{b_\ell\left( x_2^{k-\ell+3}- x_1^{k-\ell+3}\right)}{x_2-x_1}
=
\sum_{\ell=0}^{k+2}\tfrac{b_\ell\left( x_2^{k-\ell+1}x_2^2- x_1^{k-\ell+1}x_1^2\right)}{x_2-x_1}
=\sum_{\ell=0}^{k+2}\tfrac{b_\ell \left(x_2^{k-\ell+1}(\kappa x_2+\lambda )-x_1^{k-\ell +1}(\kappa x_1+\lambda )\right)}{x_2-x_1}\nonumber \\
&=\kappa \left[\sum_{\ell=0}^{k+2}\tfrac{b_\ell \left(x_2^{k-\ell+2}-x_1^{k-\ell +2}\right)}{x_2-x_1}
\right]
+\lambda \left[
\sum_{\ell=0}^{k+2}\tfrac{b_\ell \left(x_2^{k-\ell+1}-x_1^{k-\ell +1}\right)}{x_2-x_1}\right]\nonumber \\
&=\kappa\left[\sum_{\ell=0}^{k+1}
\tfrac{b_\ell \left( x_2^{(k+1)-\ell+1}- x_1^{(k+1)-\ell+1}\right)}{x_2-x_1}\right]
+
\lambda \left[
\sum_{\ell=0}^{k}\tfrac{b_\ell \left(x_2^{k-\ell+1}-x_1^{k-\ell +1}\right)}{x_2-x_1}\right]
+\lambda \left[\sum_{\ell=k+1}^{k+2}\tfrac{b_\ell \left(x_2^{k-\ell+1}-x_1^{k-\ell +1}\right)}{x_2-x_1}\right]
\nonumber \\
&=\kappa  z_{k+1}+\lambda  z_{k}+\tfrac{\lambda  b_{k+2}}{x_2-x_1}\left(\tfrac{1}{x_2}-\tfrac{1}{x_1}\right)=\kappa z_{k+1}+\lambda z_{k}+b_{k+2}.
\end{align}
This, \eqref{r02}, and induction 
show  for all $k\in\N_0$ that 
$z_k=a_k$.  Combining the two cases $\lambda=0$ and $\lambda\neq 0$
 completes the proof of \cref{r01}.
\end{proof}

The following result, Lemma~\ref{l01}, generalizes
the discrete Gronwall inequality which is the special
case $\lambda=0$ of  Lemma~\ref{l01}.
\begin{lemma}[Discrete Gronwall-type recursion] \label{l01}
Let $\kappa ,\lambda , x_1,x_2\in\C$, $(a_n)_{n\in\N_0},(b_n)_{n\in\N_0}\subseteq\C$ 
satisfy for all $n\in\N_0$, $i\in \{1,2\}$ that 
\begin{align}
\label{r08}
\begin{split}
 a_{n}=b_n+\sum_{k=0}^{n-1}\bigl[\kappa  a_k+\1_{\N}(k)\lambda  a_{\lvert k-1\rvert}\bigr],\quad
x_i^2=(1+\kappa )x_i+\lambda   ,
\quad\text{and}\quad
x_1\neq x_2    .
\end{split}
\end{align}
  Then it holds for all $n\in\N_0$ that
  \begin{equation}  \begin{split} \label{eq:exp.recursion.assertion}
    a_n=\sum_{k=0}^{n}\left[
\tfrac{b_k-\1_{\N}(k)b_{\lvert  k-1\rvert}}{x_2-x_1}\left(
x_2^{n-k+1}-x_1^{n-k+1}\right)\right].
  \end{split}     \end{equation}
\end{lemma}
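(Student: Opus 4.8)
The plan is to collapse the accumulated (summed) recursion in \eqref{r08} into a genuine two-step linear recurrence and then quote \cref{r01}. First I would define the auxiliary sequence $(\tilde b_n)_{n\in\N_0}\subseteq\C$ by $\tilde b_n := b_n-\1_{\N}(n)\,b_{\lvert n-1\rvert}$, so that $\tilde b_0=b_0$ and $\tilde b_n=b_n-b_{n-1}$ for $n\in\N$; note that $\tilde b_n$ is exactly the numerator that appears in \eqref{eq:exp.recursion.assertion}.

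Next I would read off the first two terms directly from \eqref{r08}: the empty sum gives $a_0=b_0=\tilde b_0$, and, since $\1_{\N}(0)=0$, also $a_1=b_1+\kappa a_0=b_1+\kappa b_0$, which rearranges to $a_1=\tilde b_1+(1+\kappa)\tilde b_0$. For $n\geq 2$ I would subtract the identity \eqref{r08} for $a_{n-1}$ from the one for $a_n$; every summand with index $k\leq n-2$ cancels, and because $n-1\in\N$ (so $\1_{\N}(n-1)=1$) and $\lvert n-2\rvert=n-2$, what remains is
\[
a_n-a_{n-1}=(b_n-b_{n-1})+\kappa a_{n-1}+\lambda a_{n-2},
\qquad\text{i.e.}\qquad
a_n=\tilde b_n+(1+\kappa)a_{n-1}+\lambda a_{n-2}.
\]

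Now the three relations $a_0=\tilde b_0$, $a_1=\tilde b_1+(1+\kappa)\tilde b_0$, and $a_{k+2}=\tilde b_{k+2}+(1+\kappa)a_{k+1}+\lambda a_k$ for $k\in\N_0$ are precisely the hypotheses \eqref{r02} of \cref{r01} with $\kappa$ there replaced by $1+\kappa$, with $\lambda,x_1,x_2$ unchanged — here one uses exactly the assumption $x_i^2=(1+\kappa)x_i+\lambda$ together with $x_1\neq x_2$ from \eqref{r08} — and with $(b_k)_{k\in\N_0}$ there replaced by $(\tilde b_k)_{k\in\N_0}$. Hence \cref{r01} yields $a_n=\tfrac{1}{x_2-x_1}\sum_{k=0}^n\tilde b_k\big(x_2^{n-k+1}-x_1^{n-k+1}\big)$ for all $n\in\N_0$, and substituting back the definition of $\tilde b_k$ gives \eqref{eq:exp.recursion.assertion}.

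I expect no real difficulty here; the only thing to watch is the bookkeeping at the small indices $n\in\{0,1,2\}$, i.e.\ making sure the indicator $\1_{\N}$ and the absolute value $\lvert n-1\rvert$ in \eqref{r08} are treated correctly so that the telescoping difference leaves exactly $\kappa a_{n-1}+\lambda a_{n-2}$, and that the two base cases line up with \eqref{r02} after the shift $\kappa\mapsto 1+\kappa$.
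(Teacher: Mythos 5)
Your proof is correct. Both you and the paper reduce \eqref{r08} to the two-step recursion of \cref{r01} with the shift $\kappa\mapsto 1+\kappa$, but you do so by dual routes. The paper passes to the partial sums $z_n=\sum_{k=0}^{n}a_k$, which satisfy $z_{n+2}=b_{n+2}+(1+\kappa)z_{n+1}+\lambda z_n$ with the \emph{original} inhomogeneity $(b_n)$, applies \cref{r01} to $(z_n)$, and only at the end recovers $a_n=z_n-z_{n-1}$ by a reindexing of the resulting sum. You instead difference the recursion itself, obtaining $a_n=\tilde b_n+(1+\kappa)a_{n-1}+\lambda a_{n-2}$ with the \emph{differenced} inhomogeneity $\tilde b_n=b_n-\1_{\N}(n)b_{\lvert n-1\rvert}$, and apply \cref{r01} directly to $(a_n)$. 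Your version is marginally leaner, since the auxiliary sequence and the final telescoping manipulation disappear and the numerator $b_k-\1_{\N}(k)b_{\lvert k-1\rvert}$ of \eqref{eq:exp.recursion.assertion} is built in from the start; the paper's version yields the closed form for the partial sums $z_n$ as a byproduct. Your bookkeeping at the small indices is accurate: the empty sum gives $a_0=b_0$, the term $\1_{\N}(0)=0$ gives $a_1=b_1+\kappa b_0=\tilde b_1+(1+\kappa)\tilde b_0$, and for $n\ge 2$ the surviving summand after cancellation is indeed $\kappa a_{n-1}+\1_{\N}(n-1)\lambda a_{\lvert n-2\rvert}=\kappa a_{n-1}+\lambda a_{n-2}$, so the hypotheses of \cref{r01} are met exactly as you claim.
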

\begin{proof}[Proof of Lemma~\ref{l01}]Throughout this proof let $(z_n)_{n\in\N_0}\subseteq \C$ satisfy for all $n\in\N_0$ that $z_n=\sum_{k=0}^{n}a_k$.
This and \eqref{r08} show for all $n\in\N_0$ that
$z_0=a_0=b_0$,
$z_1= a_0+a_1= a_0+b_1+\kappa  a_0=b_1+(1+\kappa )b_0$,
$
  z_{n+2}-z_{n+1}=a_{n+2}
=
b_{n+2}+
\kappa \bigl(\sum_{k=0}^{n+1} a_k\bigr)+\lambda \bigl(\sum_{k=0}^{n}a_k\bigr)
     =b_{n+2}+\kappa  z_{n+1}+\lambda  z_{n}$, and therefore 
$z_{n+2}=b_{n+2}+(1+\kappa ) z_{n+1}+\lambda  z_{n}.$
This, 
\cref{r01} (applied with
$\kappa\gets (1+\kappa )$,
 $(a_n)_{n\in\N_0}\gets (z_n)_{n\in\N_0}$
in the notation of \cref{r01}), and the assumptions on $x_1,x_2$
prove for all $n\in\N_0$ that
$
z_{n}=\sum_{k=0}^{n}
\frac{b_k(x_2^{n-k+1}-x_1^{n-k+1})}{x_2-x_1}
$. Therefore, it holds for all $n\in\N$ that
\begin{align}\begin{split}
&
a_n= z_n-z_{n-1}=
\sum_{k=0}^{n}
\tfrac{b_k(x_2^{n-k+1}-x_1^{n-k+1})}{x_2-x_1}
-\sum_{k=0}^{n-1}
\tfrac{b_k(x_2^{n-k}-x_1^{n-k})}{x_2-x_1}\\
&=
\sum_{k=0}^{n}
\tfrac{b_k(x_2^{n-k+1}-x_1^{n-k+1})}{x_2-x_1}
-\sum_{k=1}^{n}
\tfrac{b_{k-1}(x_2^{n-k+1}-x_1^{n-k+1})}{x_2-x_1}=
\sum_{k=0}^{n}
\tfrac{(b_k-\1_{\N}(k)b_{\lvert k-1\rvert})(x_2^{n-k+1}-x_1^{n-k+1})}{x_2-x_1}.
 \end{split}
\end{align}
This and the fact that
$a_0=b_0$ (see
\eqref{r08}) complete the proof of \cref{l01}.
\end{proof}

The following result, Corollary~\ref{l05}, generalizes, e.g., 
\cite[Lemma 3.6]{HJK+18}.
\begin{corollary}[Discrete Gronwall-type inequality]\label{l05}
Let $(a_n)_{n\in\N_0}\subseteq [0,\infty)$, $\kappa,\lambda, c_1,c_2, c_3,c_4,\beta $ $\in [0,\infty)$ satisfy for all $n\in\N_0$ that
\begin{align}\label{l03}
a_n\leq  c_1 +c_2 n + c_3 \sum_{k=1}^{n} c_4^k
+
 \sum_{k=0}^{n-1} \left[
\kappa a_k +\lambda \1_{\N}(k)a_{\lvert k-1\rvert}\right]\quad\text{and}\quad\beta=\tfrac{(1+\kappa)+\sqrt{(1+\kappa)^2+4\lambda}}{2}>1.
\end{align}
Then
it holds for all $n\in\N_0$ that
\begin{align}
a_n\leq \begin{cases}
\tfrac{3}{2} \beta ^n c_1
+ \tfrac{3c_2(\beta^n-1)}{2( \beta-1)}
+\tfrac{3}{2} c_3n
\beta^{n}
&\text{if } c_4=\beta,\\
\tfrac{3}{2} \beta ^n c_1
+\tfrac{3c_2(\beta^n-1)}{2( \beta-1)}
+\tfrac{3c_3( c_4^{n+1}- c_4\beta ^n)}{2( c_4- \beta) }&\text{else. }
\end{cases}
\end{align}

\end{corollary}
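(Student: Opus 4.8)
The plan is to reduce the inequality in~\eqref{l03} to the exact recurrence solved by \cref{l01} via a comparison argument, and then to bound the resulting closed-form expression. First I would set $b_n:=c_1+c_2n+c_3\sum_{k=1}^{n}c_4^{\,k}$ for $n\in\N_0$ and let $(\tilde a_n)_{n\in\N_0}\subseteq[0,\infty)$ be defined by the equality $\tilde a_n=b_n+\sum_{k=0}^{n-1}\big[\kappa\tilde a_k+\1_{\N}(k)\lambda\tilde a_{\lvert k-1\rvert}\big]$ for all $n\in\N_0$, which is exactly the form treated in \cref{l01}. Since $\kappa,\lambda\in[0,\infty)$ and $a_0\le c_1=b_0=\tilde a_0$ by~\eqref{l03}, an induction on $n$ (using~\eqref{l03} and the fact that the right-hand side is nondecreasing in each $a_k$ because $\kappa,\lambda\ge0$) shows $a_n\le\tilde a_n$ for all $n\in\N_0$, so it suffices to bound $\tilde a_n$.

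Next I would apply \cref{l01} to $(\tilde a_n)_{n\in\N_0}$ with the roots $x_2:=\beta$ and $x_1:=\tfrac{(1+\kappa)-\sqrt{(1+\kappa)^2+4\lambda}}{2}$ of $x^2=(1+\kappa)x+\lambda$. Since $\kappa\ge0$ we have $(1+\kappa)^2+4\lambda\ge1>0$, hence $x_1\neq x_2$; moreover $x_1\le0$ and $\lvert x_1\rvert\le\beta=x_2$. Using $b_0-\1_{\N}(0)b_{\lvert 0-1\rvert}=c_1$ and $b_k-b_{k-1}=c_2+c_3c_4^{\,k}$ for $k\in\N$, \cref{l01} yields
\begin{align*}
\tilde a_n=\frac{c_1\big(\beta^{n+1}-x_1^{n+1}\big)}{\beta-x_1}+\sum_{k=1}^{n}\frac{\big(c_2+c_3c_4^{\,k}\big)\big(\beta^{n-k+1}-x_1^{n-k+1}\big)}{\beta-x_1}.
\end{align*}

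Finally I would use the elementary estimate $0\le\frac{\beta^{\,j}-x_1^{\,j}}{\beta-x_1}\le\tfrac{3}{2}\beta^{\,j-1}$, valid for every $j\in\N$ (nonnegativity and $\beta-x_1\ge\beta$ come from $x_1\le0$, and the upper bound then follows from $\lvert x_1\rvert\le\beta$), insert it into the displayed formula --- all exponents occurring there are $\ge1$ since $0\le k\le n$ --- exploit $c_1,c_2,c_3,c_4\in[0,\infty)$, and evaluate the two geometric sums $\sum_{k=1}^{n}\beta^{n-k}=\tfrac{\beta^n-1}{\beta-1}$ (this is where $\beta>1$ enters) and $\sum_{k=1}^{n}c_4^{\,k}\beta^{n-k}$, which equals $n\beta^n$ if $c_4=\beta$ and $\tfrac{c_4^{n+1}-c_4\beta^n}{c_4-\beta}$ otherwise. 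Putting these together gives precisely the two claimed upper bounds for $\tilde a_n$, and hence for $a_n\le\tilde a_n$. I expect no real obstacle: once \cref{l01} is available the argument is bookkeeping, and the only steps needing slight care are the sign-and-size estimate for $\tfrac{\beta^j-x_1^j}{\beta-x_1}$ and treating the cases $c_4=\beta$ and $c_4\neq\beta$ (with the degenerate subcase $c_4=0$) separately.
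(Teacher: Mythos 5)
Your proposal is correct and follows essentially the same route as the paper: define the majorizing sequence $(\tilde a_n)$ solving the recursion with equality, compare by induction using $\kappa,\lambda\ge 0$, apply \cref{l01} with the roots $x_1\le 0<x_2=\beta$, bound $\bigl|\tfrac{x_2^{j}-x_1^{j}}{x_2-x_1}\bigr|\le\tfrac32\beta^{j-1}$, and evaluate the geometric sums with the case split $c_4=\beta$ versus $c_4\neq\beta$. The only point to spell out is that the $\tfrac32$ (rather than $2$) in the key estimate requires splitting off $\tfrac{|x_1|^{j}}{\beta-x_1}=\tfrac{|x_1|}{\beta+|x_1|}\,|x_1|^{j-1}\le\tfrac12\beta^{j-1}$ separately, exactly as in the paper, rather than just using $\beta-x_1\ge\beta$ and $|x_1|\le\beta$ termwise.
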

\begin{proof}[Proof of \cref{l05}]Throughout this proof let 
$(\tilde{a}_n)_{n\in\N_0},
(b_n)_{n\in\N_0}
\subseteq [0,\infty)$, $x_1,x_2\in \R$ satisfy for all
$n\in\N_0$ that
$
x_1= \tfrac{(1+\kappa)-\sqrt{(1+\kappa)^2+4\lambda}}{2}$,
$ 
x_2= \tfrac{(1+\kappa)+\sqrt{(1+\kappa)^2+4\lambda}}{2}$,
\begin{align}
\tilde{a}_n=  b_n +
 \sum_{k=0}^{n-1} \left[
\kappa \tilde{a}_k +\lambda \1_{\N}(k)\tilde{a}_{\lvert k-1\rvert}\right],\quad\text{and}\quad
b_n=
c_1 +c_2n + c_3 \sum_{k=1}^{n} c_4^k.\label{l04}
\end{align}
This and the quadratic formula show for all $n\in \N_0$, 
$k\in\{0,1,\ldots,n\}$,
$i\in \{1,2\}$ that
\begin{align}\begin{split}
&
\tfrac{\lvert x_1\rvert}{\lvert  x_2-x_1\rvert}= \tfrac{
\sqrt{(1+\kappa)^2+4\lambda}- (1+\kappa)}{2\sqrt{(1+\kappa)^2+4\lambda}}\leq \tfrac{1}{2}
,\quad 
\tfrac{\lvert x_2\rvert}{\lvert  x_2-x_1\rvert}
= \tfrac{(1+\kappa)+\sqrt{(1+\kappa)^2+4\lambda}}{2\sqrt{(1+\kappa)^2+4\lambda}}\leq 1
,
\\
\lvert 
x_1\rvert\leq \lvert x_2\rvert,
&\quad\text{and}\quad 
\left\lvert \tfrac{x_2^{n-k+1}-x_1^{n-k+1}}{x_2-x_1}\right\rvert
\leq 
\left[ 
\tfrac{\lvert x_2\rvert}{\lvert  x_2-x_1\rvert}\lvert x_2\rvert ^{n-k}+
\tfrac{1}{\lvert  x_2-x_1\rvert}\lvert x_1\rvert ^{n-k+1} \right]
\leq \tfrac{3}{2} \lvert x_2\rvert^{n-k},
\end{split}
\end{align}
$x_1\neq x_2$,
$x_i^2= (1+\kappa)x_i+\lambda$,
$b_0=c_1$,
and 
$b_{n+1}-b_{n}=c_2+c_3c_4^{n+1} $. This, \cref{l01} (applied with $ (a_n)_{n\in\N_0}\gets (\tilde{a}_n)_{n\in\N_0}$ in the notation of \cref{l01}),  the fact that $\lambda,\kappa,c_1,c_2,c_3,c_4\geq 0$, and the definition of $x_2$ show for all $n\in\N_0$ that
\begin{align}\begin{split}
&\tilde{a}_n= \sum_{k=0}^{n}
\tfrac{(b_k-\1_{\N}(k)b_{\lvert k-1\rvert})(x_2^{n-k+1}-x_1^{n-k+1})}{x_2-x_1}
\leq \tfrac{3}{2}c_1 \lvert x_2\rvert^n+
\sum_{k=1}^{n}\Bigl[\tfrac{3}{2}(c_2+c_3 c_4^{k})\lvert x_2 \rvert^{n-k}\Bigr]\\
&=\tfrac{3}{2}\lvert x_2 \rvert^n c_1
+\tfrac{3}{2}c_2\left[\sum_{k=0}^{n-1} \lvert x_2\rvert^{k}\right]
+ 
\tfrac{3}{2}c_3\lvert x_2 \rvert^{n}\left[
\sum_{k=1}^{n}\tfrac{\lvert c_4\rvert^{k}}{ \lvert x_2\rvert^{k}}\right].
\end{split}
\end{align}
This and the fact that
$x_2=\beta >1$
show for all $n\in\N_0$ that if $c_4\neq x_2$, then
\begin{align}\begin{split}
&\tilde{a}_n \leq \tfrac{3}{2}\lvert x_2 \rvert^n c_1
+\tfrac{3}{2} c_2\tfrac{\lvert x_2\rvert^n-1}{\lvert x_2\rvert-1}
+\tfrac{3}{2} c_3\lvert x_2 \rvert^{n}
\frac{\lvert \frac{c_4}{x_2}\rvert^{n+1}-\lvert \frac{c_4}{x_2}\rvert}{\lvert \frac{c_4}{x_2}\rvert-1}
\\
&
\leq \tfrac{3}{2}\lvert x_2 \rvert^n c_1
+\tfrac{3}{2} c_2\tfrac{\lvert x_2\rvert^n-1}{\lvert x_2\rvert-1}
+\tfrac{3}{2} c_3
\tfrac{\lvert c_4\rvert^{n+1}-c_4\lvert x_2\rvert ^n}{\lvert c_4\rvert-\lvert x_2\rvert }
\label{l06}
\end{split}
\end{align}
and if $c_4=x_2$, then \begin{equation}
\tilde{a}_n\leq \tfrac{3}{2}\lvert x_2 \rvert^n c_1
+\tfrac{3}{2} c_2\tfrac{\lvert x_2\rvert^n-1}{\lvert x_2\rvert-1}
+\tfrac{3}{2} c_3n
\lvert x_2 \rvert^{n}
.\label{l06b}
\end{equation}
Furthermore, \eqref{l03}, \eqref{l04}, and induction prove for all $n\in \N_0$ that $a_n\leq \tilde{a}_n$. This,  \eqref{l06}, \eqref{l06b}, and the fact that $x_2=\beta$ complete the proof of \cref{l05}.
\end{proof}

\section{Multilevel Picard approximations  of McKean-Vlasov SDEs}
\label{sec:MLP}
The following theorem, Theorem~\ref{t01}, shows that the computational effort of MLP  approximations
of McKean-Vlasov SDEs is of order $2+$ if the noise is additive
and if the drift coefficients depend linearly on the distributions.
In Theorem~\ref{t01}, for every $n,M\in \N$ we think of $C_{n,M}$ as an upper bound for the sum of the number
of scalar 
random variables 
and the number of function evaluations of the drift coefficient 
which are used to compute one realization of
$X_{n,m}^0 (T)$.
Let us comment on the recursion \eqref{a06} which describes this computational effort. The binary variables $\mathfrak{v}, \mathfrak{f}\in \{0,1\}$ indicate whether we want to count the number of scalar random variables ($\mathfrak{v}=1$) and whether we want to count the number of function evaluations of the drift coefficient ($\mathfrak{f}=1$). 
For every $n,M\in \N$ to compute one realization of
$X_{n,m}^0 (T)$ the scheme in \eqref{a05} first has to generate a realization of $(W^0(\frac{kT}{m^n}))_{k\in \{0,1,\ldots,m^n\}}$ which corresponds to the generation of $m^nd$ scalar random variables. Additionally, the scheme evaluates the drift coefficient once at $(0,0)\in \R^d\times \R^d$. Next, for every $l\in \{1,2,\ldots, n-1\}$ the scheme does $m^{n-l}$ times the following: it evaluates the drift coefficient twice, it generates a continuously uniformly on $[0,1]$ distributed random variable, it generates a realization of 
$(W^\theta(\frac{kT}{m^l}))_{k\in \{0,1,\ldots,m^l\}}$ (corresponding to  $m^ld$ scalar random variables), and for suitable $s\in [0,T]$, $\theta\in \Theta$ it calls twice the functions which calculate 
$X_{l,m}^\theta (s)$ and $X_{n,m}^\theta (s)$.

\begin{theorem}\label{t01}
Let 
$T,L\in (0,\infty)$, 
$d\in\N$,
$\xi\in\R^d$, $\mu \in C(\R^d\times\R^d,\R^d)$, 
$\Theta= \bigcup_{n\in\N}(\N_0)^n$,
let $\lVert \cdot\rVert\colon \R^d\to[0,\infty)$ 
be the standard norm,
assume for all $x_1,y_1,x_2,y_2\in\R^d$ that
\begin{align}\label{a03}
\bigl\lVert \mu(x_1,y_1)-\mu(x_2,y_2)\bigr\rVert\leq \tfrac{L}{2}\lVert x_1-x_2\rVert+ \tfrac{L}{2}\lVert y_1-y_2\rVert,
\end{align}
let $(\Omega,\mathcal{F},\P)$ be a  probability space, 
let $\unif^\theta\colon\Omega\to[0,1]$, $\theta\in\Theta$, be i.i.d.\ random variables, assume for all $t\in [0,1] $ that
$\P(\unif^0\leq t)=t$,
let $W^\theta\colon [0,T]\times\Omega\to\R^d$, $\theta\in\Theta$, be i.i.d.\ standard 
Brownian motions with continuous sample paths, 
assume that 
$(\unif^\theta)_{ \theta\in\Theta}$ and
$(W^\theta)_{ \theta\in\Theta}$ 
are independent,
let
$X\colon [0,T]\times\Omega\to\R^d$ be a $(\sigma(\{W^0(s)\colon s\in [0,t]\}))_{t\in[0,T]}$-adapted
stochastic process 
with continuous sample paths, assume for all $t\in[0,T]$  that
$\int_0^T\left(\E\!\left[\lVert X(s)\rVert^2\right]\right)^{\!\nicefrac{1}{2}}\,ds<\infty$
 and
\begin{align}
\label{a02}
X(t)=\xi+\int_0^t \int \mu(X(s),x)\P\bigl(X(s)\in dx\bigr)\,ds+W^0(t),
\end{align}
let $X^\theta_{n,m}\colon [0,T]\times\Omega\to\R^d$, $\theta\in\Theta$, $m\in\N$, $n\in \N_0$, satisfy  for all
$\theta\in\Theta$, $m,n\in\N$, $t\in [0,T] $  that
$
X_{0,m}^\theta(t)=0
$
and \begin{align}\label{a05}
\begin{split}
&X_{n,m}^\theta(t)= \xi+ W^\theta\left(\sup\!\left(\{\tfrac{kT}{m^n}\colon k\in\N_0\}\cap[0,t]\right)\right)+t\mu(0,0)\\&+ \sum_{\ell=1}^{n-1}
\sum_{k=1}^{m^{n-\ell}}\tfrac{t\left[
\mu\bigl(X^\theta_{\ell,m}(\unif^{(\theta,n,k,\ell)}t),
X^{(\theta,n,k,\ell)}_{\ell,m}(\unif^{(\theta,n,k,\ell)}t)
\bigr)
-
\mu\bigl(X^\theta_{\ell-1,m}(\unif^{(\theta,n,k,\ell)}t),
X^{(\theta,n,k,\ell)}_{\ell-1,m}(\unif^{(\theta,n,k,\ell)}t)
\bigr)\right]}{m^{n-\ell}},
\end{split}\end{align}
let 
$\mathfrak{v},\mathfrak{f}\in\{0,1\}$,
and let
$C_{n,m}\in \N_0$, $m,n\in\N_0$, satisfy for all
$m,n\in\N$
 that
$C_{0,m}=0$ and
\begin{equation}
C_{n,m} \leq 
\mathfrak{v}m^nd+\mathfrak{f}+
\sum_{\ell=1}^{n-1}\Bigl[ m^ {n-\ell}\Bigl( \mathfrak{v}(m^\ell d+1) +2 \mathfrak{f}
+2 C_{\ell,m}+2  C_{\ell-1,m}\Bigr)\Bigr].\label{a06}
\end{equation}
Then\begin{enumerate}[(i)]\itemsep0pt
\item \label{t01c}it holds for all 
$t\in[0,T]$ that
$
\left(\E\!\left[\lVert X(t)\rVert^2\right]\right)^{\!\nicefrac{1}{2}}
\leq  \left[
\lVert \xi\rVert+ \lVert \mu(0,0)\rVert t+
\sqrt{td}\right]e^{Lt}$,
\item \label{t01a}
it holds
for all
$t\in[0,T]$,
$m,n\in\N$  that
$X^0_{n,m}(t)$ is measurable and
\begin{equation}
\left(
\E \!\left[\left
\lVert 
X^0_{n,m}(t)-X(t)\right\rVert^2\right]\right)^{\!\nicefrac{1}{2}}
\leq m^{-n/2}e^{m/2}
\left[
\lVert \xi\rVert+ \lVert \mu(0,0)\rVert t+
\sqrt{Td}\right]e^{Lt}
\left(1+2Lt\right)^{n},
\end{equation}
and 
\item\label{t01b} there exists $\mathrm{n}=(\mathrm{n}_\epsilon)_{\epsilon\in (0,1)}\colon (0,1) \to \N$ such that for all
$\delta,\epsilon\in(0,1)$  it holds that
$ \sup_{k\in[\mathrm{n}_\epsilon,\infty)\cap\N}\sup_{t\in[0,T]}\E \!\left[
\lVert 
X^0_{k,k}(t)-X(t)\rVert^2\right]\leq \epsilon^2$ and 
\begin{equation}\small\begin{split}
C_{\mathrm{n}_\epsilon,\mathrm{n}_\epsilon}\epsilon^{2+\delta}\leq  (\mathfrak{v}d+ \mathfrak{f})
\sup_{k\in\N}\left[
(4k+4)^{k+1}\left[\tfrac{
e^{k/2}
\left[1+
\lVert \xi\rVert+ \lVert \mu(0,0)\rVert T+
\sqrt{Td}\right]e^{LT}
\left(1+2LT\right)^{k}}{k^{k/2}}\right]^{2+\delta}\right]<\infty .
\end{split}
\end{equation}
\end{enumerate}
\end{theorem}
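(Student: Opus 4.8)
\emph{Part~(i).} The plan is to treat this as a routine a priori estimate. Using~\eqref{a02}, the bound $\lVert\mu(x,y)\rVert\le\lVert\mu(0,0)\rVert+\tfrac L2\lVert x\rVert+\tfrac L2\lVert y\rVert$ (which follows from~\eqref{a03}), and $\int\lVert x\rVert\,\P(X(s)\in dx)=\E[\lVert X(s)\rVert]$, I would bound $\lVert X(t)\rVert$ pathwise, take $L^2(\P)$-norms via Minkowski's integral inequality together with $(\E[\lVert W^0(t)\rVert^2])^{1/2}=\sqrt{td}$, and thus obtain $g(t)\le\lVert\xi\rVert+\lVert\mu(0,0)\rVert t+\sqrt{td}+L\int_0^t g(s)\,ds$ for $g(t):=(\E[\lVert X(t)\rVert^2])^{1/2}$. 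Since $\int_0^T g<\infty$ by hypothesis, the Gronwall inequality with the nondecreasing forcing term then yields the claim.

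\emph{Part~(ii).} Measurability of $X^0_{n,m}(t)$ is an induction on $n$ using~\eqref{a05} (the time-discretisation map $t\mapsto\sup(\{\tfrac{kT}{m^n}\colon k\in\N_0\}\cap[0,t])$ is Borel, $\mu$ is continuous, and joint measurability is preserved by finite sums and compositions). For the error bound I would fix $m$, set $E_n(t):=(\E[\lVert X^0_{n,m}(t)-X(t)\rVert^2])^{1/2}$, and run a bias--variance decomposition relative to $\calG_n:=\sigma(W^0)\vee\sigma(\{X^0_{\ell,m}(s)\colon\ell\le n-1,\,s\in[0,T]\})$. Because $X$ is $(\sigma(\{W^0(s)\colon s\le t\}))_t$-adapted, $X(t)$ is $\calG_n$-measurable, so the cross term vanishes and $E_n(t)^2=\E[\lVert X^0_{n,m}(t)-\E[X^0_{n,m}(t)\mid\calG_n]\rVert^2]+\E[\lVert\E[X^0_{n,m}(t)\mid\calG_n]-X(t)\rVert^2]$. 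The ``outer copies'' $X^{(0,n,k,\ell)}_{\cdot,m}$ and the sampling times $\unif^{(0,n,k,\ell)}$ are, for distinct $(k,\ell)$, mutually independent, independent of $\calG_n$, and equal in distribution to the corresponding $X^0_{\cdot,m}$; hence the Monte Carlo averages in~\eqref{a05} average out conditionally on $\calG_n$ and the telescope collapses, leaving $\E[X^0_{n,m}(t)\mid\calG_n]=\xi+W^0(\sup(\{\tfrac{kT}{m^n}\}\cap[0,t]))+\int_0^t\int\mu(X^0_{n-1,m}(r),x)\,\P(X^0_{n-1,m}(r)\in dx)\,dr$ (the summand $t\mu(0,0)$ cancels the $\ell=1$ telescope endpoint since $X^0_{0,m}\equiv0$). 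Comparing with~\eqref{a02}, the bias is $(W^0(\sup(\{\tfrac{kT}{m^n}\}\cap[0,t]))-W^0(t))$ plus $\int_0^t[\int\mu(X^0_{n-1,m}(r),x)\P(X^0_{n-1,m}(r)\in dx)-\int\mu(X(r),x)\P(X(r)\in dx)]\,dr$, whose $L^2$-norm is $\le\sqrt{Td/m^n}+L\int_0^t E_{n-1}(r)\,dr$ (by~\eqref{a03}, using $W_1\le W_2$ for the law-difference term). For the variance, conditional independence of the $m^{n-\ell}$ replicas given $\calG_n$ and the Lipschitz bound give $\E[\lVert X^0_{n,m}(t)-\E[X^0_{n,m}(t)\mid\calG_n]\rVert^2]\le\sum_{\ell=1}^{n-1}\tfrac{t^2L^2}{m^{n-\ell}}\,\E[\lVert X^0_{\ell,m}(\unif t)-X^0_{\ell-1,m}(\unif t)\rVert^2]\le\sum_{\ell=1}^{n-1}\tfrac{2tL^2}{m^{n-\ell}}\int_0^t(E_\ell(r)^2+E_{\ell-1}(r)^2)\,dr$, with $E_0(r)^2=\E[\lVert X(r)\rVert^2]$ controlled by part~(i). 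Together these yield a Gronwall-type recursion for $E_n(t)^2$, which I would close by the inductive ansatz $E_n(t)\le m^{-n/2}e^{m/2}[\lVert\xi\rVert+\lVert\mu(0,0)\rVert t+\sqrt{Td}]e^{Lt}(1+2Lt)^n$: each Picard level contributes at most a factor $(1+2Lt)$ to the bias integral; once $E_\ell\sim m^{-\ell/2}$ is inserted, the variance sum produces the net decay $m^{-n}$ up to a factor $m$ from the index shift $E_{\ell-1}$ versus $E_\ell$; and the $e^{m/2}$ slack absorbs that factor $m$, the discretisation variance $Td/m^n$, and the geometric sums $\sum_j m^{-j}$ (the base case $n=1$, where the variance term is absent, already illustrates the role of the $e^{m/2}$).

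\emph{Part~(iii).} Applying part~(ii) with $m=n$ and bounding $t\le T$ gives $\sup_{t\in[0,T]}(\E[\lVert X^0_{n,n}(t)-X(t)\rVert^2])^{1/2}\le b_n:=n^{-n/2}e^{n/2}[\lVert\xi\rVert+\lVert\mu(0,0)\rVert T+\sqrt{Td}]e^{LT}(1+2LT)^n\to0$. For the cost I would rescale~\eqref{a06}: for $a_n:=m^{-n}C_{n,m}$ one has $a_0=0$ and, after estimating the geometric sums in~\eqref{a06}, $a_n\le c_1+c_2n+\sum_{k=0}^{n-1}[2a_k+\tfrac2m\1_{\N}(k)a_{\lvert k-1\rvert}]$ with $c_1,c_2$ bounded by a universal multiple of $\mathfrak{v}d+\mathfrak{f}$, which is precisely the hypothesis of~\cref{l05} with $\kappa=2$, $\lambda=2/m$, $c_3=0$ (so $\beta=\tfrac{3+\sqrt{9+8/m}}{2}\in(1,4)$); hence $C_{n,m}=m^na_n\le(\mathfrak{v}d+\mathfrak{f})(\mathrm{const})\,n\,(m\beta)^n$ and, on the diagonal, $C_{n,n}\le(\mathfrak{v}d+\mathfrak{f})(4n+4)^{n+1}$. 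I then set $\mathrm{n}_\epsilon:=\min\{N\in\N\colon\sup_{n\ge N}b_n\le\epsilon\}$ (finite, and independent of $\delta$), so that $\sup_{k\in[\mathrm{n}_\epsilon,\infty)\cap\N}\sup_{t\in[0,T]}\E[\lVert X^0_{k,k}(t)-X(t)\rVert^2]\le\sup_{k\ge\mathrm{n}_\epsilon}b_k^2\le\epsilon^2$, while minimality of $\mathrm{n}_\epsilon$ forces $\epsilon<b_{\mathrm{n}_\epsilon-1}$ when $\mathrm{n}_\epsilon\ge2$ (the case $\mathrm{n}_\epsilon=1$ being handled directly from $C_{1,1}\le\mathfrak{v}d+\mathfrak{f}$). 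Feeding this lower bound on $\epsilon$ and the diagonal cost estimate into $C_{\mathrm{n}_\epsilon,\mathrm{n}_\epsilon}\epsilon^{2+\delta}$, together with elementary comparisons between $b_{\mathrm{n}_\epsilon-1}$, $\mathrm{n}_\epsilon$ and the quantity on the right, gives $C_{\mathrm{n}_\epsilon,\mathrm{n}_\epsilon}\epsilon^{2+\delta}\le(\mathfrak{v}d+\mathfrak{f})\sup_{k\in\N}[(4k+4)^{k+1}[k^{-k/2}e^{k/2}(1+\lVert\xi\rVert+\lVert\mu(0,0)\rVert T+\sqrt{Td})e^{LT}(1+2LT)^k]^{2+\delta}]$, and this supremum is finite because, on taking logarithms, its dominant term is $-\tfrac\delta2 k\log k\to-\infty$.

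\emph{Main obstacle.} The crux is part~(ii): pinning down $\calG_n$ so that the cross term in the bias--variance split disappears while the $m^{n-\ell}$ Monte Carlo replicas stay conditionally independent, checking the exact collapse of the telescope under $\E[\,\cdot\mid\calG_n]$, and --- above all --- driving the resulting Gronwall-type recursion to the precise closed form $m^{-n/2}e^{m/2}[\cdots]e^{Lt}(1+2Lt)^n$, i.e.\ tracking every power of $m$ (the $E_{\ell-1}$ versus $E_\ell$ shift, the time-integral losses, the discretisation variance) and of $(1+2Lt)$ so that the recursion telescopes into the claimed bound without leaking spurious logarithmic or polynomial factors. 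Part~(i) is standard, and part~(iii) is essentially bookkeeping (including a harmless index/constant matching in the last step) once part~(ii) and~\cref{l05} are available.
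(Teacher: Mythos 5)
Your overall architecture coincides with the paper's: part~(i) via Jensen/Minkowski and Gronwall; part~(ii) via the bias--variance split with respect to $\mathcal{G}_{n-1,m}=\sigma(\{W^0(t),X^0_{\ell,m}(t)\colon\ell\le n-1\})$, the collapse of the telescope under conditional expectation (disintegration plus independence of the $(X^{(0,n,k,\ell)},\unif^{(0,n,k,\ell)})$), and Bienaym\'e for the Monte Carlo averages; part~(iii) via the rescaling $a_n=m^{-n}C_{n,m}$ and \cref{l05}, the stopping rule $\mathrm{n}_\epsilon$, and the comparison of $C_{\mathrm{n}_\epsilon,\mathrm{n}_\epsilon}$ with the error at level $\mathrm{n}_\epsilon-1$. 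Parts~(i) and~(iii) are fine.

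The genuine gap is exactly at the point you flag as the crux: closing the error recursion in part~(ii) by induction on the ansatz $E_n(t)\le m^{-n/2}e^{m/2}[\cdots]e^{Lt}(1+2Lt)^n$. That induction does not close. The recursion has the form $E_n(t)\le\sqrt{Td}\,m^{-n/2}+\sum_{\ell=0}^{n-1}2L\sqrt{t}\,m^{-(n-\ell-1)/2}\bigl(\int_0^tE_\ell(s)^2\,ds\bigr)^{1/2}$, and the dominant summand $\ell=n-1$ carries no Monte Carlo suppression at all; inserting the ansatz and bounding $\int_0^t(1+2Ls)^{2\ell}e^{2Ls}\,ds\le t(1+2Lt)^{2\ell}e^{2Lt}$, as you propose, yields $E_n\lesssim m^{-(n-1)/2}e^{m/2}(1+2Lt)^n e^{Lt}$, i.e.\ the ansatz times $\sqrt{m}$. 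This surplus is not absorbable by the $e^{m/2}$, which is a constant independent of $n$ and cannot compensate a per-level loss. The correct mechanism, which your sketch discards, is the factorial gain from the iterated time integrals: the actual solution of the recursion is controlled by $\max_{k\in\{0,\ldots,n\}}(m^{n-k}k!)^{-1/2}$ --- a competition between Monte Carlo averaging at the top $n-k$ levels and Picard contraction (the $1/\sqrt{k!}$ from $\int_0^t s^k\,ds$) at the bottom $k$ levels --- and only at the very end does one pass to $\max_k m^{k/2}/\sqrt{k!}\le e^{m/2}$. So you must either prove and induct on this sharper intermediate bound, or invoke a ready-made MLP Gronwall lemma as the paper does (\cite[Lemma~3.9]{HJKN20} applied with $a=\sqrt{Td}$, $b=2L\sqrt{t}$, $c=1/\sqrt{m}$); the crude ansatz alone provably fails.

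A minor further point: your inductive hypothesis contains $\lVert\mu(0,0)\rVert t$ and $(1+2Lt)^n e^{Lt}$ evaluated at the running time, while the recursion feeds in $E_\ell(s)$ for all $s\le t$; the paper avoids this bookkeeping by first solving the recursion abstractly and only then substituting the bound on $E_0$ from part~(i), which is the cleaner order of operations.
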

\begin{proof}[Proof of \cref{t01}]\newcounter{step}
Throughout this proof for every random variable $\mathfrak{X}\colon \Omega\to\R^d$ with 
$\E [\lVert \mathfrak{X}\rVert^2]<\infty$
and every $\sigma$-algebra $\mathcal{A}\subseteq  \mathcal{F}$ let 
$\var[\mathfrak{X}|\mathcal{A}]\colon \Omega\to [0,\infty)$ be a random variable which satisfies that 
a.s.\ it holds that
$\var[\mathfrak{X}|\mathcal{A}]= 
\E \bigl[\lVert \mathfrak{X}
-\E [ \mathfrak{X}|\mathcal{A}]
\rVert^2|\mathcal{A}\bigr]
$ and let $\mathcal{G}_{n,m}\subseteq \mathcal{F}$, $n\in\N_0$, 
$m\in\N$, be the $\sigma$-algebras which
satisfy for all 
$n\in\N_0$, $m\in\N$ that
$
\mathcal{G}_{n,m}=\sigma\!\left(\left\{W^0(t),X_{\ell,m}^0(t)\colon \ell\in \{0,1,\ldots,n\},t\in [0,T]\right\}\right).
$

This proof is organized as follows. 
In Step \ref{st05} we prove  the upper bound of the exact solution $X$ in~\eqref{t01c}.
In Steps~\ref{st01} and \ref{st02} we establish distributional, measurablility, and integrability properties for the MLP approximations in \eqref{a05}.
In Step~\ref{st03} we consider 
 \emph{the bias}. In
Step~\ref{st04} we consider \emph{the statistical error}.
In Step~\ref{st06} we  combine Steps~\ref{st03} and  \ref{st04} to obtain a recursive bound of the approximation error,
which, together with a Gronwall-type inequality and the upper bound of the exact solution in~\eqref{t01c}, establishes \eqref{t01a}. 
In Step~\ref{st07} we estimate the computational complexity and obtain \eqref{t01b}.

\refstepcounter{step}\label{st05} 
 \emph{Step~\thestep.}\ 
We prove the upper bound of the exact solution $X$ in  \eqref{t01c}.
Observe that 
Jensen's inequality,
the triangle inequality, and \eqref{a03} show  for all 
$s\in[0,T]$ that
\begin{equation}\begin{split}
&
\left(
\E\!\left[\left\lVert \int \mu(X(s),x)\P\bigl(X(s)\in dx\bigr)\right\rVert^2\right]\right)^{\!\nicefrac{1}{2}}
\leq 
\left(
\E\!\left[ \int \left\lVert\mu(X(s),x)\right\rVert^2\P\bigl(X(s)\in dx\bigr)\right]\right)^{\!\nicefrac{1}{2}}\\
&\leq \lVert \mu(0,0)\rVert
+\tfrac{L}{2} 
\left(\E\!\left[ \lVert X(s)\rVert^2\right]\right)^{\!\nicefrac{1}{2}}
+\tfrac{L}{2} 
\left(\E\!\left[ \lVert X(s)\rVert^2\right]\right)^{\!\nicefrac{1}{2}}\\
&
= \lVert \mu(0,0)\rVert+
L\left(\E\! \left[\lVert X(s)\rVert^2\right]\right)^{\!\nicefrac{1}{2}}.
\end{split}\end{equation}
This, \eqref{a02},  the triangle inequality, and the fact that
$\forall\,t\in[0,T] \colon \E\bigl[\left\lVert W^0(t)\right\rVert^2\bigr]=td$ 
 prove  for all $t\in[0,T]$  that
\begin{align}\begin{split}
& \left(\E\!\left[\lVert X(t)\rVert^2\right]\right)^{\!\nicefrac{1}{2}}
=
 \left(\E\!\left[\left \lVert \xi+\int_0^t \int \mu(X(s),x)\P\bigl(X(s)\in dx\bigr)\,ds+W^0(t)\right\rVert^2\right]\right)^{\!\nicefrac{1}{2}} \\
&\leq  \lVert \xi\rVert+
 \int_0^t \left(\E\!\left[\left \lVert \int  \mu(X(s),x)
\P\bigl(X(s)\in dx\bigr)
\right\rVert^2\right]\right)^{\!\nicefrac{1}{2}}
\,ds +\left(\E\!\left[\left\lVert W^0(t)\right\rVert^2\right]\right)^{\!\nicefrac{1}{2}} \\
&
\leq  \lVert \xi\rVert+
\int_0^t \lVert \mu(0,0)\rVert+L\left(\E\!\left[\left\lVert X(s)\right\rVert^2\right]\right)^{\!\nicefrac{1}{2}} ds+\sqrt{td}\\
&= 
\int_0^t 
L
\left(\E\!\left[\left\lVert X(s)\right\rVert^2\right]\right)^{\!\nicefrac{1}{2}}ds
+\lVert \xi\rVert+ \lVert \mu(0,0)\rVert t+\sqrt{td}.\end{split}\label{e03}
\end{align} 
This, the fact that
$\forall\,m\in\N\colon X_{0,m}^0=0$,
 Gronwall's lemma,  and the fact that 
$\int_{0}^{T} \left(\E\!\left[\lVert X(t)\rVert^2\right]\right)^{\!\nicefrac{1}{2}}\,dt<\infty$
prove  for all $t\in[0,T]$  that
\begin{align}
&
\left(
\E \!\left[\left
\lVert 
X^0_{0,m}(t)-X(t)\right\rVert^2\right]\right)^{\!\nicefrac{1}{2}}=
\left(\E\!\left[\lVert X(t)\rVert^2\right]\right)^{\!\nicefrac{1}{2}}
\leq  \left[
\lVert \xi\rVert+ \lVert \mu(0,0)\rVert t+
\sqrt{td}\right]e^{Lt}.\label{e01}
\end{align}
This proves \eqref{t01c}.

\refstepcounter{step}\label{st01} 
 \emph{Step~\thestep}.\
We establish  measurability and distributional properties. 
First, the assumptions on measurablity,  \eqref{a05}, induction, and the fact that
 $\forall\,m\in\N,\theta\in\Theta\colon 
X_{0,m}^\theta=0
$ prove for all $m\in \N$, $n\in\N_0$, $\theta\in\Theta$ that
${X}_{n,m}^\theta$ is measurable.
Next, the fact that
$\forall\,m\in\N,\theta\in\Theta\colon X_{0,m}^\theta=0$,
\eqref{a05}, and induction prove for all $n\in\N_0$,  $m\in\N$,
$\theta\in\Theta$ that
\begin{align}\begin{split}
&\sigma\!\left(\left\{W^\theta(t),X^\theta_{\ell,m}(t)
\colon \ell\in \{0,1,\ldots,n\},t\in [0,T]\right\}\right)
\\
&\subseteq  \sigma \!\left(\left\{W^\theta(t), W^{(\theta,i,\nu)}(t),\unif^{(\theta,i,\nu)}\colon i\in \{0,1,\ldots,n\},\nu\in\Theta, t\in[0,T] \right\} \right).
\end{split}\label{a01}\end{align}
This and the fact that
$\forall\,m\in\N,\theta\in\Theta\colon 
X_{0,m}^\theta=0
$
 prove for all 
$n,m\in\N$,
$\theta\in\Theta$,
$k,\ell\in\N$,
$j\in\{\ell-1,\ell\}$ that
\begin{align}\begin{split}
&\sigma\!\left(\left\{X^{(\theta,n,k,\ell)}_{j,m}(t)
\colon t\in [0,T]\right\}\right)\\
&
\subseteq  \sigma \!\left(\left\{W^{(\theta,n,k,\ell)}(t), W^{(\theta,n,k,\ell,\nu)}(t),\unif^{(\theta,n,k,\ell,\nu)}\colon\nu\in\Theta, t\in[0,T] \right\} \right).
\end{split}\end{align}
This, \eqref{a01}, and the independence assumptions show
for all 
$n,m\in\N$,
$\theta\in\Theta$
that
\begin{equation}
(W^\theta,(X^\theta_{j,m})_{j\in \{0,1,\ldots,n-1\}}),\quad
(X^{(\theta,n,k,\ell)}_{\ell,m}, X^{(\theta,n,k,\ell)}_{\ell-1,m}),\quad 
\unif^{(\theta,n,k,\ell)},\quad 
k,\ell\in\N,
\end{equation}
are independent.
This, the fact that
$\forall\,m\in\N,\theta\in\Theta\colon 
X_{0,m}^\theta=0
$, \eqref{a05},
the disintegration theorem 
(see, e.g., \cite[Lemma 2.2]{HJK+18}), and induction
show 
for all 
$m\in\N$,
$n\in\N_0$  that
$
( W^{\theta},(X_{\ell,m}^{\theta})_{ \ell\in\{0,1,\ldots,n\}}
)$, $\theta\in\Theta$, are identically distributed.

\refstepcounter{step}\label{st02} 
\emph{Step~\thestep.}\ 
We establish  that the approximations are square-integrable. Observe that the triangle inequality,
\eqref{a03},  distributional properties (see Step~\ref{st01}), and
the disintegration theorem 
(see, e.g., \cite[Lemma 2.2]{HJK+18}) prove for all
$n,m,k\in\N$, $\ell\in \{1,2,\ldots,n-1\}$, 
$j\in \{\ell-1,\ell\}$,
$t\in [0,T]$ that
\begin{align}
&
t\left(
\E
\!\left[
\left\lVert 
\mu\bigl(X^0_{j,m}(t \unif^{(0,n,k,\ell)}),
X^{(0,n,k,\ell)}_{j,m}(t \unif^{(0,n,k,\ell)})
\bigr)\right\rVert^2\right]\right)^{\!\nicefrac{1}{2}}\nonumber \\
&\leq  t\left\lVert \mu(0,0)\right\rVert
+\tfrac{tL}{2}
\left(\E\! \left[\bigl\lVert 
X^0_{j,m}(t \unif^{(0,n,k,\ell)})\bigr\rVert^2\right]\right)^{\!\nicefrac{1}{2}}+\tfrac{tL}{2} 
\left(
\E\! \left[\bigl\lVert 
X^{(0,n,k,\ell)}_{j,m}(t \unif^{(0,n,k,\ell)})
\bigr\rVert^2\right]\right)^{\!\nicefrac{1}{2}}\nonumber \\
&=  t\left\lVert \mu(0,0)\right\rVert
+  L\sqrt{t}
\left(t\E\! \left[\bigl\lVert 
X^0_{j,m}(t \unif^{(0,n,k,\ell)})\bigr\rVert^2\right]\right)^{\!\nicefrac{1}{2}}
\nonumber \\
&=t
\left\lVert \mu(0,0)\right\rVert
+  L\sqrt{t}
\left(\int_{0}^{t}\E\! \left[\bigl\lVert 
X^0_{j,m}(s)\bigr\rVert^2\right]\!ds\right)^{\!\nicefrac{1}{2}}.\label{a16}
\end{align}
This, the fact that
$\forall\,m\in\N,\theta\in\Theta\colon 
X_{0,m}^\theta=0
$, \eqref{a05}, the triangle inequality, and induction yield
for all
$n,m,k\in\N$, $\ell\in \{1,2,\ldots,n-1\}$,
$j\in \{\ell-1,\ell\}$
that
\begin{align}\small\begin{split}
&
\sup_{t\in[0,T]}\left[
\left(\E\!\left[
\left\lVert 
X_{n,m}^0(t)\right\rVert^2\right]\right)^{\!\nicefrac{1}{2}}+
t\left(
\E
\!\left[
\left\lVert 
\mu\bigl(X^0_{j,m}(t \unif^{(0,n,k,\ell)}),
X^{(0,n,k,\ell)}_{j,m}(t \unif^{(0,n,k,\ell)})
\bigr)\right\rVert^2\right]\right)^{\!\nicefrac{1}{2}}\right]<\infty.\end{split}
\label{a04}
\end{align}

\refstepcounter{step}\label{st03} 
\emph{Step~\thestep}.\ We consider the bias.
Observe that 
\eqref{a05}, 
\eqref{a04},
linearity of conditional expectations, the
 disintegration theorem (see, e.g., \cite[Lemma 2.2]{HJK+18}),   distributional properties
(cf. Step~\ref{st01}), a telescoping sum argument, 
the fact that
$\forall\,m\in\N\colon X_{0,m}^0=0$,
and the substitution rule
imply that for all
$n,m\in\N$, 
$t\in [0,T]$ it holds a.s.\ that
\begin{align}
&\E \Bigl[
X_{n,m}^0(t)-\xi-W^0\left(\sup\!\left(\{\tfrac{kT}{m^n}\colon k\in\N_0\}\cap[0,t]\right)\right)\Big|\mathcal{G}_{n-1,m} \Bigr]
\nonumber \\ 
&
= t\mu(0,0)+
\sum_{\ell=1}^{n-1}\Biggl(\frac{t}{m^{n-\ell}}
\sum_{k=1}^{m^{n-\ell}}
\biggl(
\E\!\left[
\mu\bigl(X^0_{\ell,m}(\unif^{(0,n,k,\ell)}t),
X^{(0,n,k,\ell)}_{\ell,m}(\unif^{(0,n,k,\ell)}t)
\bigr)\middle|\mathcal{G}_{n-1,m}\right]\nonumber \\
&\qquad\qquad\qquad\qquad\qquad\qquad\qquad
- \E\!\left[
\mu\bigl(X^0_{\ell-1,m}(\unif^{(0,n,k,\ell)}t),
X^{(0,n,k,\ell)}_{\ell-1,m}(\unif^{(0,n,k,\ell)}t)
\bigr)
\middle|\mathcal{G}_{n-1,m}
\right]
\biggr)
\Biggr)\nonumber \\
&
= t\mu(0,0)+
\sum_{\ell=1}^{n-1}\Biggl(\frac{1}{m^{n-\ell}}
\sum_{k=1}^{m^{n-\ell}}
\int_{0}^{t}
\E\!\left[
\mu\bigl(X^0_{\ell,m}(s),
X^{(0,n,k,\ell)}_{\ell,m}(s)
\bigr)\middle|\mathcal{G}_{n-1,m}\right]\nonumber \\
&\qquad\qquad\qquad\qquad\qquad\qquad\qquad
- \E\!\left[
\mu\bigl(X^0_{\ell-1,m}(s),
X^{(0,n,k,\ell)}_{\ell-1,m}(s)
\bigr)
\middle|\mathcal{G}_{n-1,m}
\right]\!ds
\Biggr)\nonumber \\
&=t\mu(0,0)+\sum_{\ell=1}^{n-1}
\int_{0}^{t}
\int
\mu\bigl(
X^{0}_{\ell,m}(s),y
\bigr)
\P\!\left( X^0_{\ell,m}(s)\in dy\right)\nonumber \\
&\qquad\qquad\qquad\qquad\qquad
- \int
\mu\bigl(
X^{0}_{\ell-1,m}(s),y
\bigr)
\P\!\left(X^0_{\ell-1,m}(s)\in dy\right)
ds \nonumber \\
&
=
\int_{0}^{t}\int
\mu\bigl(
X^{0}_{n-1,m}(s),y
\bigr)\P \!\left(
X^0_{n-1,m}(s)\in dy\right)ds
.\label{a18b}
\end{align}
Next, 
\eqref{a02} 
shows that for all $t\in [0,T]$ it holds a.s.\ that
\begin{align}\begin{split}
&
\E\! \left[X(t)-\xi-W^0(t)
\middle| \mathcal{G}_{n-1,m}
\right]
=X(t)-\xi-W^0(t)=\int_0^t \int \mu(X(s),x)\P\bigl(X(s)\in dx\bigr)\,ds.
\end{split}\end{align}
This, 
the triangle inequality,
the fact that
$\forall\,s,t\in [0,T]\colon \E\bigl[\lVert W^0(t)-W^0(s)\rVert^2\bigr]= d\lvert t-s\rvert$,
\eqref{a18b}, Jensen's inequality,  \eqref{a03},  and Tonelli's theorem show for all 
$n,m\in\N$,
$t\in [0,T]$ that
{\small
\begin{align}\label{a18}
&
\left(
\E\!
\left[
\left\lVert \E \!\left[
X_{n,m}^0(t)-
X(t)
\middle|\mathcal{G}_{n-1,m}\right]\right\rVert^2\right]\right)^{\!\nicefrac{1}{2}}
\leq \left(
\E\!
\left[
\left\lVert 
W^0\left(\sup\!\left(\{\tfrac{kT}{m^n}\colon k\in\N_0\}\cap[0,t]\right)\right)
- W^0(t)
\right\rVert^2\right]\right)^{\!\nicefrac{1}{2}}
\nonumber \\
&+ 
\Biggl(\E\biggl[\Bigl\lVert 
\E \Bigl[\Bigl(
X_{n,m}^0(t)-\xi-W^0\left(\sup\!\left(\{\tfrac{kT}{m^n}\colon k\in\N_0\}\cap[0,t]\right)\right)\Bigr)-
\Bigl(X(t)-\xi-W^0(t)\Bigr)
\Big|\mathcal{G}_{n-1,m} \Bigr]\Bigr\rVert^2\biggr]\Biggr)^{\!\nicefrac{1}{2}}\nonumber \\
&\leq \tfrac{\sqrt{Td}}{\sqrt{m^n}}+ \left(\E\!\left[\left\lVert \int_{0}^{t}\int\left[
\mu\bigl(X^0_{n-1,m}(s),
y
\bigr)
- \mu(X(s),x)\right]
\P\bigl(X^{0}_{n-1,m}(s)\in dy, X(s)\in dx\bigr)
\,ds\right\rVert^2\right]\right)^{\!\nicefrac{1}{2}}\nonumber \\
&\leq \tfrac{\sqrt{Td}}{\sqrt{m^n}}+\left(\E\!\left[t
\int_{0}^{t}
\int
\left\lVert
\mu\bigl(
X^{0}_{n-1,m}(s),y
\bigr)-\mu(X(s),x)\right\rVert^2
\P\bigl(X^{0}_{n-1,m}(s)\in dy, X(s)\in dx\bigr)\,ds
\right]\right)^{\!\nicefrac{1}{2}}\nonumber  \\
&\leq \tfrac{\sqrt{Td}}{\sqrt{m^n}}+
\tfrac{L\sqrt{t}}{2}
 \left(\E\!\left[
\int_{0}^{t}
\left\lVert 
X^{0}_{n-1,m}(s)-X(s)\right\rVert^2ds
\right]\right)^{\!\nicefrac{1}{2}}
+
\tfrac{L\sqrt{t}}{2}
 \left(\E\!\left[
\int_{0}^{t}
\E\!\left[\left\lVert 
X^{0}_{n-1,m}(s)-X(s)\right\rVert^2\right]\!ds
\right]\right)^{\!\nicefrac{1}{2}}\nonumber \\
&=\tfrac{\sqrt{Td}}{\sqrt{m^n}}+L\sqrt{t}
 \left(
\int_{0}^{t}\E\!\left[
\left\lVert 
X^{0}_{n-1,m}(s)-X(s)\right\rVert^2\right]\!ds
\right)^{\!\nicefrac{1}{2}}
.
\end{align}}
\refstepcounter{step}\label{st04} 
\emph{Step~\thestep}.\ We consider the statistical error. 
Distributional properties (cf. Step~\ref{st01}) imply  for all 
$n,m\in\N$, $\ell\in \{1,2,\ldots,n-1\}$ that
\begin{enumerate}[a)]\itemsep0pt
\item  it holds for all $k\in \N$ that
$(X^{(0,n,k,\ell)}_{\ell,m}, X^{(0,n,k,\ell)}_{\ell-1,m}, \unif^{(0,n,k,\ell)}) $ and 
$\mathcal{G}_{n-1,m}$ are independent,
\item 
it holds that
$(X^{(0,n,k,\ell)}_{\ell,m}, X^{(0,n,k,\ell)}_{\ell-1,m}, \unif^{(0,n,k,\ell)}) $, $k\in\N$,
are i.i.d., and
\item 
it holds  that
$(X^{(0,n,1,\ell)}_{\ell,m}, X^{(0,n,1,\ell)}_{\ell-1,m}) $
and 
$(X^{0}_{\ell,m}, X^{0}_{\ell-1,m}) $ are identically distributed.
\end{enumerate}
This,
\eqref{a05},
the triangle inequality,
Biennaym\'e's identity,
the assumptions on distributions,  and 
the disintegration theorem (see, e.g., \cite[Lemma 2.2]{HJK+18})
prove that for all 
$n,m\in\N$, 
$t\in [0,T]$ it holds a.s.\
that
\begin{align}
&
\left(\var\! \left[X_{n,m}^0(t)\middle|\mathcal{G}_{n-1,m}\right]\right)^{\!\nicefrac{1}{2}}=\Biggl(\var \Biggl[ \sum_{\ell=1}^{n-1}\biggl[\tfrac{t}{m^{n-\ell}}
\sum_{k=1}^{m^{n-\ell}}\Bigl[
\mu\bigl(X^0_{\ell,m}(\unif^{(0,n,k,\ell)}t),
X^{(0,n,k,\ell)}_{\ell,m}(\unif^{(0,n,k,\ell)}t)
\bigr)\nonumber \\[-5pt]
&\qquad\qquad\qquad\qquad\qquad\qquad
- 
\mu\bigl(X^0_{\ell-1,m}(\unif^{(0,n,k,\ell)}t),
X^{(0,n,k,\ell)}_{\ell-1,m}(\unif^{(0,n,k,\ell)}t)
\bigr)\Bigr]\biggr]\Big|\mathcal{G}_{n-1,m}\Biggr]\Biggr)^{\!\nicefrac{1}{2}}\nonumber \\
&\leq \sum_{\ell=1}^{n-1}\Biggl[\tfrac{t}{m^{n-\ell}}\Biggl(\var \Biggl[ 
\sum_{k=1}^{m^{n-\ell}}\Bigl[
\mu\bigl(X^0_{\ell,m}(\unif^{(0,n,k,\ell)}t),
X^{(0,n,k,\ell)}_{\ell,m}(\unif^{(0,n,k,\ell)}t)
\bigr)\nonumber \\[-10pt]
&\qquad\qquad\qquad\qquad\qquad\qquad
- 
\mu\bigl(X^0_{\ell-1,m}(\unif^{(0,n,k,\ell)}t),
X^{(0,n,k,\ell)}_{\ell-1,m}(\unif^{(0,n,k,\ell)}t)
\bigr)\Bigr]\bigg|\mathcal{G}_{n-1,m}\Biggr]\Biggr)^{\!\nicefrac{1}{2}}\Biggr]\nonumber \\
&\leq \sum_{\ell=1}^{n-1}\Biggl[\tfrac{\sqrt{t}}{\sqrt{m^{n-\ell}}}\biggl(t\E \biggl[ \Bigl\lVert 
\mu\bigl(X^0_{\ell,m}(\unif^{(0,n,1,\ell)}t),
X^{(0,n,1,\ell)}_{\ell,m}(\unif^{(0,n,1,\ell)}t)
\bigr)\nonumber \\[-10pt]
&\qquad\qquad\qquad\qquad\qquad
- 
\mu\bigl(X^0_{\ell-1,m}(\unif^{(0,n,1,\ell)}t),
X^{(0,n,1,\ell)}_{\ell-1,m}(\unif^{(0,n,1,\ell)}t)
\bigr)\Bigr\rVert^2\bigg|\mathcal{G}_{n-1,m}\biggr]\biggr)^{\!\nicefrac{1}{2}}\Biggr]
\end{align}
and
\begin{align}
&\left(\var\! \left[X_{n,m}^0(t)\middle|\mathcal{G}_{n-1,m}\right]\right)^{\!\nicefrac{1}{2}}\nonumber \\
&\leq\sum_{\ell=1}^{n-1}\Biggl[
\tfrac{\sqrt{t}}{\sqrt{m^{n-\ell}}}\biggl(
\int_{0}^{t}\int 
\left\lVert 
\mu\bigl(X_{\ell,m}^{0}(s),x\bigr)- \mu\bigl(X^{0}_{\ell-1,m}(s), y\bigr)\right\rVert^2\nonumber \\
&\qquad\qquad\qquad\qquad\qquad\qquad\qquad\qquad 
\P\!\left( X_{\ell,m}^{(0,n,1,\ell)}(s)\in dx, X_{\ell-1,m}^{(0,n,1,\ell)}(s) \in dy\right)
 ds\biggr)^{\!\nicefrac{1}{2}}\Biggr]\nonumber \\
&=\sum_{\ell=1}^{n-1}\Biggl[
\tfrac{\sqrt{t}}{\sqrt{m^{n-\ell}}}\biggl(
\int_{0}^{t}\int 
\left\lVert 
\mu\bigl(X_{\ell,m}^{0}(s),x\bigr)- \mu\bigl(X^{0}_{\ell-1,m}(s), y\bigr)\right\rVert^2\nonumber \\
&\qquad\qquad\qquad\qquad\qquad\qquad\qquad\qquad 
\P\!\left( X_{\ell,m}^{0}(s)\in dx, X_{\ell-1,m}^{0}(s) \in dy\right)
 ds\biggr)^{\!\nicefrac{1}{2}}\Biggr]\nonumber \\
&\leq \sum_{\ell=1}^{n-1}\Biggl[
\tfrac{L \sqrt{t}}{2\sqrt{m^{n-\ell}}}\left(
\int_{0}^{t}
\left\lVert X_{\ell,m}^{0}(s)-
X^{0}_{\ell-1,m}(s) \right\rVert^2ds
\right)^{\!\nicefrac{1}{2}}
\nonumber \\
&\qquad\qquad\qquad\qquad\qquad+\tfrac{L \sqrt{t}}{2\sqrt{m^{n-\ell}}}\left(
\int_{0}^{t}\E\!\left[
\left\lVert X_{\ell,m}^{0}(s)-
X^{0}_{\ell-1,m}(s) \right\rVert^2\right]ds\right)^{\!\nicefrac{1}{2}}
\Biggr].\label{a14}
\end{align}
This, the tower property, the definition of conditional variances, the triangle inequality,
  Jensen's inequality, and Tonelli's theorem
show 
for all $n,m\in \N $, $t\in[0,T]$ that
\begin{align}
&
\left(
\E\!
\left[\left\lVert X_{n,m}^0(t)-
\E \!\left[
X_{n,m}^0(t)
\middle|\mathcal{G}_{n-1,m}\right]\right\rVert^2\right]\right)^{\!\nicefrac{1}{2}}\nonumber \\
&=
\left(\E\!\left[
\E\!
\left[\left\lVert X_{n,m}^0(t)-
\E \!\left[
X_{n,m}^0(t)
\middle|\mathcal{G}_{n-1,m}\right]\right\rVert^2\middle|\mathcal{G}_{n-1,m} \right]\right]\right)^{\!\nicefrac{1}{2}}
= \left(\E\!\left[\var\! \left[X_{n,m}^0(t)\middle|\mathcal{G}_{n-1,m}\right]\right]\right)^{\!\nicefrac{1}{2}}
\nonumber \\
&
\leq  \sum_{\ell=1}^{n-1}\Biggl[
\frac{L \sqrt{t}}{\sqrt{m^{n-\ell}}}\left(
\int_{0}^{t}\E\!\left[
\left\lVert X_{\ell,m}^{0}(s)-
X^{0}_{\ell-1,m}(s) \right\rVert^2\right]\!ds
\right)^{\!\nicefrac{1}{2}}\Biggl]\nonumber \\
&\leq \sum_{\ell=0}^{n-1}
\frac{(2-\1_{\{n-1\}}(\ell))L\sqrt{t}}{\sqrt{m^{n-\ell-1}}}
\left(\int_{0}^{t}
\E \!\left[
\left
\lVert 
X^{0}_{\ell,m}(s)
-X(s)\right\rVert^2\right]\!ds\right)^{\!\nicefrac{1}{2}}.\label{a22}
\end{align}

\refstepcounter{step}\label{st06} 
\emph{Step~\thestep.}\ We now prove \eqref{t01a}.
Observe that the definition of $\mathcal{G}_{n,m}$, $n\in\N_0$, $m\in\N$, 
and the fact that $X$ is $(\sigma(\{W^0(s)\colon s\in [0,t]\}))_{t\in[0,T]}$-adapted  show
 for all $n,m\in\N$, $t\in[0,T]$ that
\begin{align}
&X_{n,m}^0(t)-X(t)= \E \!\left[
X_{n,m}^0(t)-
X(t)
\middle|\mathcal{G}_{n-1,m}\right]
+
X_{n,m}^0(t)-
\E \!\left[
X_{n,m}^0(t)
\middle|\mathcal{G}_{n-1,m}\right].
\label{a23}\end{align}
This,  the triangle inequality, \eqref{a18}, and \eqref{a22} show
for all $n,m\in\N$, $t\in[0,T]$ that
\begin{align}
&
\left(
\E\!\left[
\left \lVert 
X_{n,m}^0(t)-X(t)\right\rVert^2\right]\right)^{\!\nicefrac{1}{2}}\nonumber \\
&
\leq 
\left(
\E\!\left[
\left\lVert 
\E \!\left[
X_{n,m}^0(t)-
X(t)
\middle|\mathcal{G}_{n-1,m}\right]\right\rVert^2\right]\right)^{\!\nicefrac{1}{2}}
+ \left(\E\!\left[\left\lVert 
X_{n,m}^0(t)-
\E \!\left[
X_{n,m}^0(t)
\middle|\mathcal{G}_{n-1,m}\right]\right\rVert^2\right]\right)^{\!\nicefrac{1}{2}}
\nonumber \\
&\leq 
\tfrac{\sqrt{Td}}{\sqrt{m^n}}+ L\sqrt{t}
 \left(
\int_{0}^{t}
\E\!\left[\left\lVert 
X^{0}_{n-1,m}(s)-X(s)\right\rVert^2\right]\!ds
\right)^{\!\nicefrac{1}{2}}\nonumber \\
&\qquad
+\sum_{\ell=0}^{n-1}\left[
\tfrac{(2-\1_{\{n-1\}}(\ell))L\sqrt{t}}{\sqrt{m^{n-\ell-1}}}
\left(\int_{0}^{t}
\E \!\left[
\left
\lVert 
X^{0}_{\ell,m}(s)
-X(s)\right\rVert^2\right]\!ds\right)^{\!\nicefrac{1}{2}}\right]\nonumber \\
&=  \tfrac{\sqrt{Td}}{\sqrt{m^{n}}}
+\sum_{\ell=0}^{n-1}\left[
\tfrac{2L\sqrt{t}}{\sqrt{m^{n-\ell-1}}}
\left(\int_{0}^{t}
\E \!\left[
\left
\lVert 
X^{0}_{\ell,m}(s)
-X(s)\right\rVert^2\right]\!ds\right)^{\!\nicefrac{1}{2}}\right].
\end{align}
This, 
\cite[Lemma~3.9]{HJKN20}
(applied for every
$m,N\in\N $, $t\in [0,T]$
 with 
$a\gets \sqrt{Td}$, 
$ b\gets 2 L\sqrt{t}$,
$ c\gets 1/\sqrt{m}$,
$  \alpha\gets 0$,
$\beta\gets t$,
$p\gets 2$,
$
(f_n)_{n\in\N_0}\gets 
\bigl([0,t]\ni s\mapsto
\bigl(
\E \bigl[
\lVert 
X^0_{n,m}(s)-X(s)\rVert^2\bigr]\bigr)^{\!\nicefrac{1}{2}}\in [0,\infty]\bigr)_{n\in\N_0}$ in the notation of \cite[Lemma~3.9]{HJKN20}), \eqref{e01}, and
the fact that for all $m,N\in\N$ it holds that
\begin{equation}
 \max_{k\in\{0,1,\ldots,N\}} \tfrac{1}{\sqrt{m^{N-k}k!}}= m^{-N/2} \max_{k\in\{0,1,\ldots,N\}} \tfrac{\sqrt{m^k}}{\sqrt{k!}}\leq m^{-N/2}e^{m/2}
\end{equation}
show for all 
$t\in[0,T]$, $m,N\in\N$ that
\begin{align}\begin{split}
&
\left(
\E \!\left[\left
\lVert 
X^0_{N,m}(t)-X(t)\right\rVert^2\right]\right)^{\!\nicefrac{1}{2}}\leq \left[
\sqrt{Td}
+ 2L\sqrt{t}\cdot \sqrt{t}
\sup_{s\in [0,t]}\left(
\E \!\left[\left
\lVert 
X^0_{0,m}(s)-X(s)\right\rVert^2\right]\right)^{\!\nicefrac{1}{2}}
\right]\\
&\qquad\qquad\cdot\left[\max_{k\in\{0,1,\ldots,N\}} \tfrac{1}{\sqrt{m^{N-k}k!}}\right]
 \left(1+2L\sqrt{t}\cdot \sqrt{t}\right)^{N-1}\\
&\leq \left[
\sqrt{Td}
+ 2Lt\left[
\lVert \xi\rVert+ \lVert \mu(0,0)\rVert t+
\sqrt{td}\right]e^{Lt}
\right] m^{-N/2}e^{m/2}\left(1+2Lt\right)^{N-1}\\
&\leq \left[
\sqrt{Td}
+2Lt\left[
\lVert \xi\rVert+ \lVert \mu(0,0)\rVert t+
\sqrt{td}\right]
\right]e^{Lt} m^{-N/2}e^{m/2}\left(1+2Lt\right)^{N-1}\\
&\leq m^{-N/2}e^{m/2}
\left[
\lVert \xi\rVert+ \lVert \mu(0,0)\rVert t+
\sqrt{Td}\right]e^{Lt}
\left(1+2Lt\right)^{N}.
\end{split}\end{align}
This proves \eqref{t01a}.

\refstepcounter{step}\label{st07} 
 \emph{Step~\thestep.}\ We estimate the computational complexity. 
Let $\mathrm{n}=(\mathrm{n}_\epsilon)_{\epsilon\in (0,1)}\colon (0,1) \to \N\cup \{\infty\}$ satisfy 
for all $\epsilon\in(0,1)$
that
\begin{equation}
\mathrm{n}_{\epsilon} = \inf\left(\left\{n\in\N\colon \sup_{k\in[n,\infty)\cap\N}\sup_{t\in[0,T]}
\left(
\E \!\left[\left
\lVert 
X^0_{k,k}(t)-X(t)\right\rVert^2\right]\right)^{\!\nicefrac{1}{2}}<\epsilon
\right\}\cup\{\infty\}\right).
\end{equation}
This, \eqref{t01a},
 and the fact that
$\forall\,\delta \in (0,1),s,t\in (0,\infty)\colon \lim_{n\to\infty}(s^n n^t n^{-n/2})=0$
prove for all $\epsilon\in (0,1)$ that
$\lim_{n\to\infty}\sup_{t\in[0,T]}\E \!\left[
\lVert 
X^0_{n,n}(t)-X(t)\rVert^2\right]=0$ and 
 $\mathrm{n}_{\epsilon}\in\N$.
Next,  \eqref{a06} and the fact that
$\forall\,\ell,m\in\N\colon m^{-\ell}\leq 1$
imply for all $m,n\in\N$ that $C_{0,m}=0$ and
\begin{equation}\begin{split}
&m^{-n}C_{n,m} \leq 
\mathfrak{v}d+\mathfrak{f}m^{-n}+
\sum_{\ell=1}^{n-1}\Bigl[ m^ {-\ell}\Bigl( \mathfrak{v}(m^\ell d+1) +2 \mathfrak{f}
+2 C_{\ell,m}+2  C_{\ell-1,m}\Bigr)\Bigr]\\
&\leq 2n(\mathfrak{v}d+\mathfrak{f})
+\sum_{\ell=0}^{n-1} \Bigl(2m^{-\ell} C_{\ell,m}+\1_{\N}(\ell)2m^{-(\ell-1)}C_{\lvert \ell-1\rvert,m}\Bigr)
.
\end{split}\end{equation}
This,
\cref{l05} (applied for every $m\in \N$
 with
$(a_n)_{n\in\N_0}\gets (m^{-n}C_{n,m})_{n\in \N_0}$,
$\kappa\gets 2$,
 $\lambda\gets 2$,
$c_1\gets 0$,
$c_2\gets 2(\mathfrak{v}d+\mathfrak{f})$,
$c_3\gets 0$, 
$c_4\gets 1$, $\beta\gets \tfrac{(1+2)+\sqrt{(1+2)^2+4\cdot 2}}{2}$
in the notation of \cref{l05}), and the fact that
$
\tfrac{(1+2)+\sqrt{(1+2)^2+4\cdot 2}}{2}\leq 4
$
 imply for all $n,m\in\N$ that
$
m^{-n}C_{n,m}\leq 
1.5 \cdot 2(\mathfrak{v}d+\mathfrak{f})\frac{4^n-1}{4-1}
\leq (\mathfrak{v}d+\mathfrak{f})4^n
$.
This and \eqref{t01a} imply for all $n,m\in\N$ that
$C_{n,m}\leq (\mathfrak{v}d+\mathfrak{f})(4m)^n$
 and
\begin{equation}\small\begin{split}
&C_{n+1,n+1}\sup_{t\in[0,T]}
\left(
\E \!\left[\left
\lVert 
X^0_{n,n}(t)-X(t)\right\rVert^2\right]\right)^{\frac{2+\delta}{2}}\\
&\leq (\mathfrak{v}d+ \mathfrak{f}) (4n+4)^{n+1}\left[
n^{-n/2}e^{n/2}
\left[
\lVert \xi\rVert+ \lVert \mu(0,0)\rVert T+
\sqrt{dT}\right]e^{LT}
\left(1+2LT\right)^{n}\right]^{2+\delta}\\
&\leq (\mathfrak{v}d+ \mathfrak{f}) 
\sup_{k\in\N}\left[
(4 k+4)^{k+1}\left[
k^{-k/2}e^{k/2}
\left[
\lVert \xi\rVert+ \lVert \mu(0,0)\rVert T+
\sqrt{dT}\right]e^{LT}
\left(1+2LT\right)^{k}\right]^{2+\delta}\right].
\end{split}\end{equation}
This and the fact that
$\forall\,\delta \in (0,1),s,t\in (0,\infty)\colon \lim_{n\to\infty}(s^n (n+1)^t n^{-n\delta/2})=0$ imply
 for all $\delta,\epsilon\in (0,1)$   that
\begin{align}\begin{split}
&C_{\mathrm{n}_\epsilon,\mathrm{n}_\epsilon} \epsilon^{2+\delta}\leq
 \1_{\{1\}}(\mathrm{n}_\epsilon)
C_{1,1}+
 \1_{[2,\infty)}(\mathrm{n}_\epsilon)C_{\mathrm{n}_\epsilon,\mathrm{n}_\epsilon}\sup_{t\in[0,T]}
\left(
\E \!\left[\left
\lVert 
X^0_{\mathrm{n}_\epsilon-1,\mathrm{n}_\epsilon-1}(t)-X(t)\right\rVert^2\right]\right)^{\frac{2+\delta}{2}}\\
&\leq  
(\mathfrak{v}d+ \mathfrak{f})
\sup_{k\in\N}\left[
(4k+4)^{k+1}\left[\tfrac{
e^{k/2}
\left[1+
\lVert \xi\rVert+ \lVert \mu(0,0)\rVert T+
\sqrt{Td}\right]e^{LT}
\left(1+2LT\right)^{k}}{k^{k/2}}\right]^{2+\delta}\right]<\infty.\end{split}
\end{align}
This proves \eqref{t01b}. The proof of \cref{t01} is thus completed. 
\end{proof}

\subsubsection*{Acknowledgements}
We thank Arnulf Jentzen for very helpful comments and suggestions. 
This work has been funded by the Deutsche Forschungsgemeinschaft (DFG, German Research Foundation) through the research grant HU1889/6-2.

{
\bibliographystyle{acm}
\bibliography{./bibfile}

}

\end{document}